%

\documentclass[12pt]{article}
\usepackage{amsmath, amssymb, amsfonts, amsthm, enumerate, float, lineno, mathrsfs, subcaption, tikz}
\usepackage{longtable,fullpage}



\DeclareMathOperator{\av}{Av}
\let\SS\relax
\DeclareMathOperator{\SS}{Seg}
\renewcommand{\S}{\mathcal{S}}
\newcommand{\C}{\mathcal{C}}
\newcommand{\N}{\mathcal{N}}
\newcommand{\W}{\mathcal{W}}
\newcommand{\e}{\mathbf{e}}

\theoremstyle{plain}
\newtheorem{theorem}{Theorem}
\newtheorem{lemma}[theorem]{Lemma}
\newtheorem{cor}[theorem]{Corollary}
\newtheorem{prop}[theorem]{Proposition}

\theoremstyle{definition}

\newtheorem{example}[theorem]{Example}
\newtheorem{conjecture}[theorem]{Conjecture}

\newcommand{\ds}{\displaystyle}

\title{Enumeration of cyclic permutations in vector grid classes}


\author{Kassie Archer and L.-K. Lauderdale}
\date{}


\begin{document}

\maketitle

\begin{abstract}
A grid class consists of permutations whose pictorial depiction can be partitioned into increasing and decreasing parts as determined by a given matrix. In this paper, we introduce a method for enumerating cyclic permutations in vector grid classes by establishing a bijective relationship with certain necklaces. We use this method to complete the enumeration of cyclic permutations in the length 3 vector grid classes. In addition, we define an analog of Wilf-equivalence between these sets. We conclude by discussing cyclic permutations in alternating grid classes.
\end{abstract}



\section{Introduction}


Grid classes are well-studied classes of permutations (see \cite{AAVRB, HucVat, MurVat} for example) that are assigned a signature in the form of a matrix $M$ with entries in $\{-1,0,1\}$, which in some way determines the structure of the permutations. More specifically, permutations in a given grid class with signature $M$ are comprised of increasing and decreasing parts laid out in a grid that is determined by $M$. For example, a permutation in the grid class with signature $M=\begin{bmatrix} 1 & 1
\end{bmatrix}$ is comprised of an increasing segment followed by another increasing segment, and thus has at most one descent. We are primarily concerned with the vector grid classes (i.e., those where $M$ is a vector, sometimes referred to as \textit{juxtaposition classes}), which received special attention in \cite{ATKINSON_RP, AMR, ARCHERELIZALDE2014} among others.  In particular, we prove a formula regarding a relationship between cyclic permutations in vector grid classes and necklaces. We then use this formula to enumerate the cyclic permutations in the length 2 and 3 vector grid classes.

Cyclic permutations in given vector grid classes have appeared in several other papers. For example, they were used to characterize the permutations realized by the periodic points of certain dynamical systems \cite{ARCHERELIZALDE2014}, appeared in the character formula for a certain representation of the symmetric group \cite{ARCHER2016}, and were used in the analysis of card-shuffling techniques \cite{DFH2013}. 
 In addition, the descent structure of cyclic permutations, which is closely related to the grid structure, was studied in various papers \cite{gessel, ELIZALDE11, BARIL13}.
 
 In this paper, we subscribe to the convention used in \cite{ARCHERELIZALDE2014}, which is to use a signature $\sigma$ of $+$'s and $-$'s in place of the row vector matrix $M$ of $1$'s and $-1$'s. 
 Our goal in this paper is two-fold: (1) to establish an enumerative relationship between cyclic permutations in grid classes with signature $\sigma\in\{+,-\}^k$ and $k$-ary necklaces, and (2) to use this relationship to enumerate the set of cyclic permutations in certain grid classes, illustrating its use.
 
 In Section  \ref{SECTION BACKGROUND}, we provide the necessary background for this paper, including definitions and propositions regarding necklaces. In \cite{gessel}, Gessel and Reutenauer used a bijection between necklaces and permutations to determine the number of permutations with a given cycle type and descent set. This was modified in \cite{ARCHERELIZALDE2014} to determine the number of cyclic permutations with signature $\sigma=+^k$, $\sigma=-^k$, or $\sigma=+-$. Analysis of the cycle structure of unimodal permutations (i.e., those with signature $\sigma=+-$, can also be found in \cite{WeissRogers, Thibon, Gannon}). In Section \ref{section main theorem}, we prove the main theorem of this paper that generalizes these previous results to establish a formula relating cyclic permutations in grid classes with signature $\sigma\in\{+,-\}^k$ and $k$-ary necklaces.
 
 In Section \ref{section 2-vector}, we recover the enumerations of cyclic permutations in the four grid classes with signature $\sigma$ when $|\sigma|=2$.
 We also prove some necessary results about unimodal cyclic permutations with a given peak position, which will aid in the enumeration of cycles in length 3 vector grid classes.
 
 
 In Section \ref{section 3-vector},  we complete the enumeration of cyclic permutations in the eight grid classes with signature $\sigma$, where $|\sigma|=3$. The first few terms and the OEIS reference number for each sequence can be found in Table~\ref{TABLE SIGMA}. 

	\begin{figure}[h]
		\centering
	\begin{tabular}{|c|c|c|}
	\hline
	 $\sigma$ & First ten terms (starting with $n=1$) & OEIS entry \\ \hline \hline
	 $+++$ &  $1, 1, 2,6,18,62,186,570, 1680, 4890$ &A303117 \\ \hline
	 $---$ &$1, 1, 2,6,18,58,186,570, 1680, 4878$ & A304200 \\ \hline
	 $+-+$ & $1, 1, 2, 5, 12, 30, 78, 205, 546, 1476$ & A136704 \\ \hline
	 $-+-$ & $1, 1, 2, 5, 12, 30, 78, 205, 546, 1476$ & A136704  \\ \hline
	$++-$ & $1,1,2,5,15,42,120,338,952,2671$ &A303980\\ \hline
	$+--$ &  $1,1,2,5,15,43,120,338,952,2672$&A304201 \\ \hline
	$-++$ & $1,1,2,5,15,42,120,338,952,2671$& A303980  \\ \hline
	$--+$ & $1,1,2,5,15,43,120,338,952,2672$&A304201  \\ \hline
	\end{tabular} 
	\captionof{table}{The signatures of the eight length 3 vector grid classes, together with the the first few terms of the enumeration of cycles in the corresponding grid classes and the OEIS entries \cite{OEIS}.}
	\label{TABLE SIGMA}
	\end{figure}
%
%
%
%
%

In Section \ref{SECTION WILF}, 
we generalize the notion of Wilf-equivalence from classical pattern avoidance to this setting. We say that two grid classes are \textit{cyc-Wilf-equivalent} if the number of cyclic permutations of length $n$ in one grid class equals the number of cyclic permutations of length $n$ in the other. Additionally, we say that two grid classes are \textit{weakly cyc-Wilf-equivalent} if their sizes are equal when $n\not\equiv 2\!\pmod 4$. The results (proven in Section~\ref{SECTION WILF}) are summarized in Table~\ref{FIGURE WILF RESULTS}. 

\begin{figure}[h]
\centering
\begin{tabular}{c||c}
cyc-Wilf-equivalence classes & weakly cyc-Wilf-equivalence classes \\ \hline
$+++$ & $+++,---$ \\
$---$ & $+-+, -+-$ \\ 
$+-+, -+-$ & $++-, +--, -++, --+$ \\
$++-, -++$ & \\
$+--, --+$ & \\
\end{tabular}
\captionof{table}{List of cyc-Wilf- and weakly cyc-Wilf-equivalence classes}
\label{FIGURE WILF RESULTS}
\end{figure}

	 In Section~\ref{SECTION ALTERNATING}, we show that the alternating grid classes of a given size are cyc-Wilf-equivalent. Finally, in Section~\ref{SECTION CONJECTURES}, we include discussion and conjectures.



\section{Background}\label{SECTION BACKGROUND}

The set of permutations of $[n]=\{1,2, \ldots, n\}$ is denoted $\mathcal S_n$ and we write $\pi \in \mathcal S_n$ in its one-line notation as $\pi=\pi_1\pi_2\ldots\pi_n$.  

\subsection{Permutation statistics}

We say that a permutation $\pi\in \S_n$ has a \textit{descent} at position $i$ if $\pi_i>\pi_{i+1}$. We say $\pi$ has an \textit{ascent} at position $i$ if $\pi_{i}<\pi_{i+1}$. For example, the permutation $4156732$ has three descents, namely those at positions 1, 5, and 6, and has three ascents, namely those at positions 2, 3, and 4. 

We define a \textit{peak} of the permutation $\pi\in\S_n$ to be $i\in[n]$ such that $\pi_{i-1}<\pi_i$ and $\pi_i>\pi_{i+1}$ (where $\pi_0=\pi_{n+1}:=0$). Similarly, we define a \textit{valley} of the permutation $\pi\in\S_n$ to be $i\in[n]$ such that $\pi_{i-1}>\pi_i$ and $\pi_i<\pi_{i+1}$ (where $\pi_0=\pi_{n+1}:=n+1$). For example, the permutation $4156732$ has peaks at positions 1 and 5 and has valleys at positions 2 and 7. Notice that this definition is non-standard since we allow peaks and valleys to occur at the beginning and end of the permutation.

Finally, we say a permutation is \textit{unimodal} if it has exactly one peak. Any valleys must occur at the beginning or end of the permutation. For example, there are eight unimodal permutations of length 4: $$1234, 1243, 1342, 2341, 1432, 2431, 3421, 4321.$$ These permutations respectively have their unique peak at position $4$, 3, 3, 3, 2, 2, 2, and 1.

\subsection{The $\sigma$-classes}

For $m<n$, we say a permutation $\pi\in \S_n$ \textit{contains} the pattern $\tau \in \S_m$ if there are indices $i_1<\cdots <i_m$ so that the subsequence $\pi_{i_1}\pi_{i_2} \ldots \pi_{i_m}$ is in the same relative order as $\tau$, and we say $\pi$ \textit{avoids} $\tau$ if $\pi$ does not contain it. For example, the permutation $\pi=142365$ avoids 321 since there is no length 3 subsequence of $\pi$ that is decreasing. The set of permutations that avoid a given pattern $\tau$ is denoted $\av(\tau)$ and the set of permutations that avoid a set of patterns $B = \{\tau_1, \tau_2, \ldots\}$ is denoted $\av(B)$. Any permutation class (i.e., a set of permutations closed under pattern containment) may be characterized in terms of pattern avoidance. 

Let $\sigma=\sigma_0\sigma_1\ldots\sigma_{k-1} \in \{+,-\}^k$ and define a partition of the set $\{0,1, \ldots, k-1\}$ by $T_{\sigma}^+=\{i:\sigma_i=+\}$ and $T_{\sigma}^-=\{i:\sigma_i=-\}$. That is, $T_{\sigma}^+$ is the set of locations of $+$ in $\sigma$, and $T_{\sigma}^-$ is the set of locations of $-$ in $\sigma$.  For example, if $\sigma=++-+$, then $T^+_\sigma=\{0,1,3\}$ and $T^-_\sigma=\{2\}$. 
%
%
We define the $\sigma$-\textit{class}, denoted $\mathcal S^\sigma$, as in \cite{AAR, ARCHERELIZALDE2014, AMR}, to be the set of permutations comprised of $k$ contiguous (possibly empty or singleton) segments, so that the $i$-th such segment is increasing when $i\in T_\sigma^+$ and decreasing when $i \in T_\sigma^-$. For example $\S^{+-}$ is the set of unimodal permutations and $\S^{+++}$ is the set of permutations with at most two descents. A $\sigma$-class is an example of a row vector grid class as defined in \cite{AAVRB, HucVat, MurVat}. Indeed, it is exactly the length $k$ vector grid class for the matrix $M = [ M_0 \, M_1 \, \cdots \, M_{k-1}]$, where $$M_i = \begin{cases} +1 & \text{if } \sigma^i = + \\ -1 & \text{if } \sigma^i = -,\end{cases}$$ and can be drawn on a series of line segments each with slope $\pm1$, as determined by $\sigma$.

Any $\pi \in \mathcal S^\sigma_n:=\S^\sigma\cap\S_n$ must admit a $\sigma$-\textit{segmentation} defined in \cite{ARCHERELIZALDE2014} (and referred to as a \textit{gridding} in \cite{AAVRB, BEVAN2014, BEVAN2015}) to be a sequence $\mathbf{e}=(e_0, e_1, \ldots, e_k)$ such that $0=e_0 \leq e_1 \leq \cdots \leq e_k=n$ and each segment $\pi_{e_i+1}\pi_{e_i+2}\cdots\pi_{e_{i+1}}$ is increasing if $i \in T_{\sigma}^+$ and decreasing if $i \in T_{\sigma}^-$.  	
For example, if $\sigma=++-$, then the permutation $\tau=268147953$ has two $\sigma$-segmentations, namely $(0, 3, 6, 9)$ and $(0,3,7,9)$. If $\sigma=-+-$, then the permutation $\tau'=862347951$ has four $\sigma$-segmentations: $(0,i,j,9)$, where $i \in \{2,3\}$ and $j \in \{6,7\}$.
The pictorial representations of the permutations $\tau$ and $\tau'$ are seen in Figure~\ref{SIGMA-CLASS EXAMPLES} below.

	\begin{figure}[H]
	\centering
		\begin{tabular}{ccc}
			\subcaptionbox{The permutation $\tau$ in $\mathcal S_{9}^{++-}$}{
			\begin{tikzpicture}[scale=.55]
				\newcommand\myx[1][(0,0)]{\pic at #1 {myx};}
				\tikzset{myx/.pic = {\draw 
 				     (-2.5mm,-2.5mm) -- (2.5mm,2.5mm)
 				     (-2.5mm,2.5mm) -- (2.5mm,-2.5mm);}}

				\draw[gray, thin] (0,0) grid (9,9);
				\foreach \x/\y in {1/2,2/6,3/8,4/1,5/4,6/7,7/9,8/5,9/3}
				\myx[(\x-.5,\y-.5)];
			\end{tikzpicture} }  & \hspace{1cm} &
			
			\subcaptionbox{The permutation $\tau'$ in $\mathcal S_{9}^{-+-}$}{
			\begin{tikzpicture}[scale=.55]
				\newcommand\myx[1][(0,0)]{\pic at #1 {myx};}
				\tikzset{myx/.pic = {\draw 
				      (-2.5mm,-2.5mm) -- (2.5mm,2.5mm)
				      (-2.5mm,2.5mm) -- (2.5mm,-2.5mm);}}

				\draw[gray, thin] (0,0) grid (9,9);
				\foreach \x/\y in {1/8,2/6,3/2,4/3,5/4,6/7,7/9,8/5,9/1}
				\myx[(\x-.5,\y-.5)];
				
			\end{tikzpicture} }

		\end{tabular}
		\caption{Pictorial representations of the permutations $\tau=268147953$ and $\tau'=862347951$ in $\mathcal S_{9}^{++-}$ and $\mathcal S_{9}^{-+-}$ respectively}
		\label{SIGMA-CLASS EXAMPLES}
	\end{figure}

Every $\sigma$-class is a permutation class and can therefore be characterized in terms of pattern avoidance. For example, $\S^{+-} =\av(213, 312)$. In \cite{ATKINSON_RP}, Atkinson gives a constructive proof that all permutations of a $\sigma$-class can be characterized as avoiding a finite set of patterns.

\subsection{Cyclic permutations}
Let $\mathcal C_n$ denote the set of cyclic permutations of $\mathcal S_n$ (i.e., the permutations in $\mathcal S_n$ that are composed of a single cycle with length $n$).  The map defined in \cite{ELIZALDE2009} by
	\[\begin{array}{rcl}
		\theta:\mathcal S_n & \rightarrow & \mathcal C_n\\
		\tau & \mapsto & \pi
	\end{array}\]
sends a permutation $\tau = \tau_1\tau_2\ldots \tau_n\in\S_n$ to the cyclic permutation $$\pi=(\tau_1\tau_2\ldots \tau_n) =\pi_1\pi_2\ldots\pi_n\in\C_n.$$ For example, $\theta(24513)=(24513)=34251$. Let $\bar{\S}_n$ denote the set of equivalence classes of permutations in $\S_n$ up to cyclic rotation. For example, $|\bar\S_3|=2$ since $[123]=\{123, 231, 312\}$ and $[132]=\{132, 321, 213\}$ are the only two elements. Clearly, we can define a bijection $\bar\theta:\bar\S_n \to \C_n$ by $\bar\theta([\tau])=\theta(\tau)$.

Additionally, we denote by $\C^\sigma$ the set of cyclic permutations in the $\sigma$-class (i.e., $\C_n^\sigma := \C_n\cap\S^\sigma$). 
For a signature $\sigma$, we set $c_\sigma(n):=|\mathcal C_n^{\sigma}|$. The proposition below implies that $c_{++-}(n) = c_{-++}(n)$ and that $c_{+--}(n) = c_{--+}(n)$. As a result, the signatures $++-$ and $-++$ are cyc-Wilf-equivalent, as are the signatures $+--$ and $--+$.

\begin{prop}\emph{\cite[Proposition 2.7]{ARCHERELIZALDE2014}}\label{REVERSE SAME}
For $\sigma = \sigma_0\sigma_1\ldots\sigma_{k-1}\in\{+,-\}^k$, let $\sigma^r$ denote the signature $\sigma_{k-1} \sigma_{k-2} \ldots \sigma_0$. Then $|\C_n^{\sigma}| = |\C_n^{\sigma^r}|$.
\end{prop}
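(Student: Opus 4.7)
The plan is to construct an explicit bijection $\S_n^\sigma \to \S_n^{\sigma_r}$ that preserves cycle type; it will then restrict to a bijection $\C_n^\sigma \to \C_n^{\sigma_r}$, proving the proposition. The natural candidate is the \emph{reverse-complement} map $\pi\mapsto \pi^\ast$, defined in one-line notation by $\pi^\ast_i = n+1-\pi_{n+1-i}$. Equivalently, if $w \in \S_n$ is the order-reversing involution $w = n(n-1)\cdots 21$, then $\pi^\ast = w\pi w$.

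First I would check that $\pi\mapsto\pi^\ast$ sends $\S_n^\sigma$ bijectively onto $\S_n^{\sigma_r}$. Starting from a $\sigma$-segmentation $0 = e_0 \leq e_1 \leq \cdots \leq e_k = n$ of $\pi$, the indices $f_j := n - e_{k-j}$ give a candidate segmentation of $\pi^\ast$; within the $j$th block one reads the $(k-1-j)$th block of $\pi$ backwards and then complements the values. Reversing the reading order of a monotone sequence flips its direction, and complementing flips it again, so direction is preserved while the order of blocks is reversed---exactly the structure encoded by $\sigma_r$. The map is an involution (since $w$ is), hence a bijection.

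Next I would observe that $\pi^\ast = w\pi w$ is conjugation by $w$ in $\S_n$, and conjugation preserves cycle type, so $\pi \in \C_n$ if and only if $\pi^\ast \in \C_n$. Combining the two steps yields the desired bijection $\C_n^\sigma \to \C_n^{\sigma_r}$ and hence the equality $|\C_n^\sigma| = |\C_n^{\sigma_r}|$.

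The main point to get right is the bookkeeping in the first step: one has to verify that the composition ``reverse then complement'' produces the reversed signature $\sigma_r$ rather than the complement $\bar\sigma$ (which arises from the complement map $\pi_i\mapsto n+1-\pi_i$ alone) or some other transformation. Neither of those alternative maps is conjugation in $\S_n$, and indeed the plain reversal $\pi_i\mapsto \pi_{n+1-i}$ fails to preserve cyclicity already on $\pi = 23\cdots n\,1$; the argument succeeds precisely because combining both operations lands in the right signature class while remaining an inner conjugation.
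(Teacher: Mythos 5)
Your proof is correct: reverse-complement is conjugation by $w=n(n-1)\cdots 21$, so it preserves cycle type, and your block bookkeeping correctly shows it carries $\S_n^{\sigma}$ onto $\S_n^{\sigma_r}$. The paper itself gives no proof but cites Proposition 2.7 of \cite{ARCHERELIZALDE2014}, whose argument is essentially this same reverse-complement bijection, so your approach matches the source.
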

 
The complementary signature $\sigma^c$, defined as $\sigma_i^c = +$ if $\sigma_i=-$ and $\sigma_i^c = -$ if $\sigma_i=+$, is not necessarily cyc-Wilf-equivalent to $\sigma$. For example, if $\sigma = +--$, then $\sigma^c = -++$ and these are not cyc-Wilf-equivalent. We conjecture in Section~\ref{SECTION CONJECTURES} that $\sigma$ and $\sigma^c$ are always weakly cyc-Wilf-equivalent.

\subsection{Words and necklaces}
A \textit{word} of length $n$ on $k$ letters is a sequence $s = s_1s_2\ldots s_n$ with $s_i \in \{0,1, \ldots, k-1\}$ for all $i\in[n]$. We denote the set of all such words by $\W_k(n)$. For example, $$\W_3(2)=\{00, 01, 02, 10, 11, 12, 20, 21, 22\}.$$ A word $s$ is \textit{$r$-periodic} if it can be written as the concatenation of $r$ copies of a word $q$ of length $\frac{n}{r}$ (i.e., if $s = q^r$), and a word is \textit{primitive} if it not $r$-periodic for any $r>1$. In other words, we say a word is primitive if it cannot be written as the concatenation of two or more copies of a shorter word. For example, the word $0020112$ is primitive and the word $00120012$ is 2-periodic. As another example, all binary words of length 7 are primitive with the exceptions of the word 0000000 and  the word 1111111.

We define the \textit{evaluation} of a word to be the sequence $(a_0, a_1, \ldots, a_{k-1})$, where $a_i$ is defined as $$a_i=|\{j \in [n] : s_j = i\}|.$$ That is, the evaluation of a word records the number of times each letter appears in the word. For example, the evaluation of the word $0012001022$ is $(5,2,3)$ since there are five 0's, two 1's and three 2's. The evaluation of $0020020$ is $(5, 0, 2)$ since there are five 0's, zero 1's, and two 2's. 

We denote the cyclic rotation of the word $s=s_1s_2\ldots s_n$ starting with $s_i$ by $$s_{[i,\rightarrow]} = s_is_{i+1}\ldots s_ns_1\ldots s_{i-1}.$$ For example, if $s=010111$, then $s_{[4,\rightarrow]} = 111010$ and $s_{[6,\rightarrow]}=101011.$ We denote the set of cyclic rotations of $s$ by $[s]$, so that $$[s]=\{s_{[i,\rightarrow]}: i \in [n]\}.$$ For example, $$[01011]=\{01011,10110, 01101,11010, 10101\}.$$ Notice that the relation given by ``$s\sim t$ if $s\in[t]$'' is an equivalence relation. This equivalence relation preserves periodicity. That is, if $s\sim t$ and $s$ is $r$-periodic, then $t$ is $r$-periodic. In particular, if $s$ is primitive, then so is $t$. We refer to a single equivalence class $[s]$ as a \textit{necklace}.  

We refer to the set $\{[s]: s\in \W_k(n)\}$ as the set of \textit{necklaces} of length $n$ on $k$ letters and denote it by $\overline{\W}_k(n)$. A necklace $[s]$ is called \textit{$r$-periodic} (respectively, \textit{primitive}) if a representative $s$ of the equivalence class is $r$-periodic (respectively, primitive). Let $\N_k(n)$ denote the set of primitive necklaces on $k$ letters of length $n$, and let $L_k(n)$ denote the number of such necklaces. (This notation is standard as $L_k(n)$ is the number of \textit{Lyndon words}, where a Lyndon word refers to a specific representative of a given necklace.)
The following proposition is well-known and is easily shown using M\"obius inversion. We let $\mu$ denote the number-theoretic M\"obius function.
\begin{prop}
If $n\geq 1$, the number of necklaces of length $n$ on $k$ letters is equal to
\[ L_k(n) = \frac{1}{n}\sum_{d|n} \mu(d)k^{\frac{n}{d}}.\]
\end{prop}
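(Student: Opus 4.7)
The plan is to count words of length $n$ on $k$ letters in two ways and then apply Möbius inversion. The total number of such words is obviously $k^n$. On the other hand, I want to classify words by their minimal period: every word $s \in \W_k(n)$ can be written uniquely as $s = q^{n/d}$ where $q$ is a primitive word, and the length $d$ of $q$ must divide $n$.

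The next step is to count, for each divisor $d$ of $n$, the number of words of length $n$ whose underlying primitive word has length $d$. Since a primitive necklace of length $d$ has exactly $d$ distinct cyclic rotations (this is where primitivity is needed: a non-primitive word would have fewer rotations because of its symmetry), the number of primitive words of length $d$ is precisely $d \cdot L_k(d)$, and each such word determines a unique word of length $n$ by repeating it $n/d$ times. Combining these observations yields the identity
\[
k^n \;=\; \sum_{d \mid n} d \cdot L_k(d).
\]

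Finally, I apply Möbius inversion to this identity to isolate $n \cdot L_k(n)$, obtaining
\[
n \cdot L_k(n) \;=\; \sum_{d \mid n} \mu\!\left(\tfrac{n}{d}\right) k^d \;=\; \sum_{d \mid n} \mu(d)\, k^{n/d},
\]
where the second equality is the standard reindexing $d \mapsto n/d$ over divisors of $n$. Dividing by $n$ gives the claimed formula. The only conceptual step requiring care is the claim that a primitive necklace of length $d$ has exactly $d$ distinct rotations; this is the one place where a short lemma (that the stabilizer of a word under cyclic rotation is trivial iff the word is primitive) is invoked, but it is entirely standard and the rest is routine algebraic manipulation.
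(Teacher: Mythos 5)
Your proof is correct and is exactly the standard Möbius-inversion argument that the paper alludes to (the paper omits the proof, stating only that the formula is well-known and easily shown using Möbius inversion); your counting identity $k^n=\sum_{d\mid n} d\cdot L_k(d)$ and the inversion step are both valid, and the primitivity claim about $d$ distinct rotations is the right supporting lemma. Note only that $L_k(n)$ here counts \emph{primitive} necklaces, which is precisely what your argument computes, so the statement and proof match.
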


For a given $\sigma \in \{+,-\}^k$ and word $s \in \W_k(n)$, we define $o_\sigma(s)$ to be $$o_\sigma(s)=|\{i \in [n] : s_i \in T^-_\sigma\}|.$$ That is, $o_\sigma(s)$ is the number of terms in $s$ that are locations of $-$ in $\sigma$. We define $o_\sigma([s])=o_{\sigma}(s)$ for any necklace $[s]$ as well. 
\begin{example}
If $\sigma = +-$, then $T^-_\sigma=\{1\}$, so $o_\sigma(s)$ is the number of 1's that appear in $s$. Thus, we have $o_\sigma(01001) = 2$ and $o_\sigma(01111) = 4$. 
\end{example}
\begin{example}
If $\sigma = -+--$, then $T_\sigma^-=\{0,2,3\}$, so $o_\sigma(s)$ is the number of 0's, 2's, or 3's that appear in $s$. Thus, $o_\sigma(1203123) = 5$ and $o_\sigma(0000103) = 6$. 
\end{example}
Let  $L^o_k(n;\sigma)$ denote the size of the set of primitive necklaces $[s]$ so that $s$ is a primitive word of size $n$ for which $o_\sigma(s)$ is odd.
We define $\N^*_k(n;\sigma)\subseteq \overline{\W}_k(n)$ to be the set of primitive necklaces together with all 2-periodic necklaces $[s] = [q^2]$ such that $q$ is a primitive word of length $\frac{n}{2}$ for which $o_\sigma(q)$ is odd. Notice that when $n$ is odd these necklaces are exactly those from $\N_k(n)$ and when $n$ is even, the size of this set is exactly $L_k(n)+L^o_k(\frac n2;\sigma)$. For ease of notation, we set $L_k^*(n;\sigma) := L_k(n)+L^o_k(\frac n2;\sigma)$ with the assumption that $L^o_k\left(\frac n2 ;\sigma\right)=0$ when $n$ is odd. 
\begin{example}
Suppose $\sigma=+-$ and $n=8$. Then $\N_2^*(8;+-)$ is comprised of all the primitive necklaces given by the set $\N_2(8)$ together with the 2-periodic necklaces $[00010001]$ and $[01110111]$. 
\end{example}

The set $\N^*_k(n;\sigma)$ appears in many proofs in this paper and is a major part of the statement and proof of our main theorem, Theorem \ref{BIJECTION BETWEEN WORDS AND SIGSEG}. 
Using M\"obius inversion, it is a straightforward exercise to compute $L^o_k(n;\sigma)$ and $L^*_k(n;\sigma)$ for a given $\sigma$ of length 2 or 3 (as in the proof of Theorem~\ref{thm sigma3 equiv sigma4}). Therefore, in this paper we present the enumeration of $C_n^{\sigma}$ for each $\sigma\in\{+,-\}^3$ in terms of these numbers. 

\begin{lemma}\label{REVERSE WORD}
If $n\geq 1$, then $L_3^*(n; \sigma) = L_3^*(n; \sigma_r)$. 
\end{lemma}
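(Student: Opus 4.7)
The plan is to split the definition $L_3^*(n;\sigma) = L_3(n) + L_3(n/2;\sigma)$ and observe that the first summand does not involve $\sigma$ at all. (Recall that $L_3(n/2;\sigma)=0$ when $n$ is odd, by convention, so the identity is trivial in that case.) Hence the lemma reduces to showing $L_3(m;\sigma) = L_3(m;\sigma_r)$ for every $m\geq 1$, i.e.\ that the parity of $o_\sigma$ cuts out the same number of primitive necklaces whether we use $\sigma$ or $\sigma_r$. I would prove this by an explicit bijection rather than by a Möbius-inversion calculation, since the bijection works in all eight cases uniformly.

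The bijection I would use is the letter-reversal involution $\phi:\{0,1,2\}\to\{0,1,2\}$ given by $\phi(j) = 2-j$, extended letterwise to words by $\phi(s_1s_2\cdots s_m) = \phi(s_1)\phi(s_2)\cdots\phi(s_m)$. The key compatibility is that $j\in T_\sigma^-$ iff $\sigma^j = -$ iff $(\sigma_r)^{2-j}=-$ iff $\phi(j)\in T_{\sigma_r}^-$, so in particular $s_i\in T_\sigma^-$ iff $\phi(s_i)\in T_{\sigma_r}^-$. Counting such indices gives $o_\sigma(s) = o_{\sigma_r}(\phi(s))$ for every word $s$, and therefore $o_\sigma(s)$ is odd precisely when $o_{\sigma_r}(\phi(s))$ is odd.

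To conclude, I would check that $\phi$ respects the necklace and primitivity structure: since $\phi$ acts letterwise, it commutes with cyclic rotation, so it descends to a well-defined involution on $\overline{\W}_3(m)$; and since $s = q^r$ iff $\phi(s) = \phi(q)^r$, it preserves primitivity in both directions. Thus $\phi$ restricts to a bijection between primitive necklaces $[s]\in N_3(m)$ with $o_\sigma(s)$ odd and those with $o_{\sigma_r}(s)$ odd, yielding $L_3(m;\sigma) = L_3(m;\sigma_r)$ and hence the lemma. The only pointthat needs mention is that $o_\sigma$ is well-defined on necklaces, which is immediate because cyclic rotation preserves the multiset of letters of $s$; I don't expect any real obstacle beyond this bookkeeping, and the argument in fact goes through for any $k$, not just $k=3$.
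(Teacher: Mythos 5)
Your proposal is correct and follows essentially the same route as the paper: it reduces to showing $L_3(\tfrac n2;\sigma)=L_3(\tfrac n2;\sigma_r)$ and uses the same letter-complementation map $s_i\mapsto k-1-s_i$ together with the observation that $T_{\sigma_r}^-=\{k-1-i : i\in T_\sigma^-\}$, so $o_\sigma(s)=o_{\sigma_r}(s')$. Your additional checks that the map commutes with rotation and preserves primitivity are details the paper leaves implicit, but the argument is the same.
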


\begin{proof}
It is enough to show that $L^o_3(\frac{n}{2}; \sigma) = L^o_3(\frac{n}{2}; \sigma_r)$. When $n$ is odd, the equality is clear, and thus we assume $n$ is even. We define the following bijection from necklaces $[s]$ of length $\frac{n}{2}$ with $o_\sigma(s)$ odd to necklaces $[s']$ of size $\frac{n}{2}$ with $o_{\sigma_r}(s')$ odd. Let $[s]$ be a primitive necklace of length $\frac{n}{2}$ for which $o_\sigma(s)$ is odd and take $[s']$ to be the necklace obtained by setting $s_i' = k-1-s_i$. Notice that $T_{\sigma_r}^- = \{k-1-i : i \in T_\sigma^-\}$, and thus $o_{\sigma_r}(s') = |\{i \in [\frac{n}{2}] : s'_i \in T^-_{\sigma_r} \}|$ is equal to $o_\sigma(s)$. Therefore, $o_{\sigma_r}(s')$ is odd and so $L^o_3(\frac{n}{2}; \sigma) = L^o_3(\frac{n}{2}; \sigma_r)$.
\end{proof}

\subsection{An ordering on words and necklaces}

Let $s= s_1s_2\ldots s_n$ and $t=t_1t_2\ldots t_n$ be words. Let $\prec_\sigma$ be the linear order on $\W_k(n)$ defined by $s\prec_\sigma t$ if one of the following holds:
\begin{enumerate}[(1)]
\item $s_1 < t_1$,
\item $s_1 = t_1 \in T^+_\sigma$ and $s_2s_3\ldots s_n\prec_\sigma t_2t_3\ldots t_n,$ or
\item $s_1 = t_1 \in T^-_\sigma$ and $t_2t_3\ldots t_n\prec_\sigma s_2s_3\ldots s_n.$
\end{enumerate}
Equivalently, $s\prec_\sigma t$ if, letting $j\ge1$ be the smallest such that $s_j \neq t_j$, either
$o_\sigma(s_{1}s_2\ldots s_{j-1})$ is even and $s_j<t_j$, or $o_\sigma(s_{1}s_2\ldots s_{j-1})$ is odd and $s_j>t_j$. 

\begin{example}
If $\sigma=+^k$, then $s\prec_\sigma t$ exactly when $s<t$ under the standard lexicographical ordering. However, if $\sigma=+--$, then $02212\prec_\sigma 02211$ since their first four terms agree, an odd number of these first four terms are elements of $T_\sigma^-$, and in the fifth position $2>1$. 
\end{example}

For $\sigma \in \{+,-\}^k$, let us define a map $\Pi_\sigma: \N^*_k(n;\sigma)\to\bar\S_n$. First, suppose $s=[s_1s_2\ldots s_n]$ is a primitive necklace. Then the words $s_{[i,\rightarrow]}$ are all distinct for $i\in[n]$. Thus, there exists a unique permutation $\tau\in\S_n$ such that $\tau_i<\tau_j$ if and only if $s_{[i,\rightarrow]} \prec_\sigma s_{[j,\rightarrow]}$. That is, $\tau$ records the ordering under $\prec_\sigma$ of the shifts of $s$. In this case, let $\Pi_\sigma([s]) = [\tau]$. If $[s]$ is not a primitive necklace, then $s = q^2$ for some primitive word $q$ of length $r:=\frac{n}{2}$ so that $o_\sigma(q)$ is odd. In this case, the $s_{[i,\rightarrow]}$ are all distinct for $1\leq i\leq\frac{n}{2}$ and $s_{[i,\rightarrow]}=s_{[i-r,\rightarrow]}$ for all $\frac{n}{2}<i \leq n$. 
For $[s]$ non-primitive, we define $\Pi_\sigma([s]) = [\tau]$, where for a given representative $s=s_1s_2\ldots s_n$, $\tau$ is the unique permutation such that:
\begin{itemize}
\item  $\tau_i<\tau_j$ when $s_{[i,\rightarrow]}\prec_\sigma s_{[j,\rightarrow]}$;
\item $\tau_1<\tau_{r+1}$; and 
\item for all $1\leq i <\frac{n}{2}$, 
\begin{itemize}
\item if $\tau_{i}<\tau_{r+i}$ and $s_i\in T_\sigma^+$, then $\tau_{i+1}<\tau_{r+i+1}$,
\item  if $\tau_{i}<\tau_{r+i}$ and $s_i\in T_\sigma^-$, then $\tau_{i+1}>\tau_{r+i+1}$, 
\item  if $\tau_{i}>\tau_{r+i}$ and $s_i\in T_\sigma^+$, then $\tau_{i+1}>\tau_{r+i+1}$, and 
\item  if $\tau_{i}>\tau_{r+i}$ and $s_i\in T_\sigma^-$, then $\tau_{i+1}<\tau_{r+i+1}$.
\end{itemize}
\end{itemize}
Under this definition, though taking different representatives $s^{(1)}$ and $s^{(2)}$ of the necklace $[s]$ would result in different permutations $\tau^{(1)}$ and $\tau^{(2)}$, they would be the same up to cyclic rotation. That is, we would have that $[\tau^{(1)}]=[\tau^{(2)}]$, and thus $\Pi_\sigma$ is well-defined. First, let us see an example of $\Pi_\sigma$ for a primitive necklace. 
\begin{example}\label{ex: prim}
Suppose $\sigma=+-$ and $[s]=[0010011]$. Taking $s=0010011$, there are seven distinct words to compare under the ordering $\prec_\sigma$, namely
\begin{align*}
s_{[1,\rightarrow]}& = 0010011, \\
s_{[2,\rightarrow]}& = 0100110, \\
s_{[3,\rightarrow]}& = 1001100, \\
s_{[4,\rightarrow]}& = 0011001, \\
s_{[5,\rightarrow]}& = 0110010, \\
s_{[6,\rightarrow]}& = 1100100, \text{ and} \\
s_{[7,\rightarrow]}& = 1001001. 
\end{align*}
Under this ordering, we clearly have that $s_{[1,\rightarrow]}, s_{[2,\rightarrow]}, s_{[4,\rightarrow]}$ and $s_{[5,\rightarrow]}$ are the four smallest words under the ordering determined by $\prec_\sigma$ since in each case, their first element is 0. To compare each pairwise, we check up to the first place the two words disagree. If they disagree in the $i$-th position and there are an even number of ones preceding that, the word with 0 in the $i$-th position is smaller. If there are an odd number of ones, the word with 1 in the $i$-th place is smaller. 

For example,  $s_{[1,\rightarrow]}$ and  $s_{[4,\rightarrow]}$ disagree for the first time in the 4-th position. The first three elements, 001, have an odd number of ones, so $s_{[4,\rightarrow]}$ is smaller than $s_{[1,\rightarrow]}$. 
Continuing this process, we obtain that $\tau_1=2$, $\tau_2=4$, $\tau_4=1$, and $\tau_5=3$. 
Similarly, we can compare $ s_{[3,\rightarrow]}, s_{[6,\rightarrow]}$ and $s_{[7,\rightarrow]}$ and find that $\tau_3=7$, $\tau_6=5$, and $\tau_7=6$. 
Thus $\Pi_\sigma([0010011]) = [2471356]$. 
\end{example}
Now, let us consider a non-primitive necklace as an example.
\begin{example}\label{ex: not prim}
 Suppose that $\sigma = +--$ and $[s]=[01210121]$. Let us take the representative $s=01210121$ and compute the corresponding $\tau$. Clearly, we can see that 
$$
s_{[1,\rightarrow]}= s_{[5,\rightarrow]} \prec_\sigma s_{[2,\rightarrow]}= s_{[6,\rightarrow]} \prec_\sigma s_{[4,\rightarrow]}= s_{[8,\rightarrow]} \prec_\sigma s_{[3,\rightarrow]}=s_{[7,\rightarrow]}.
$$ 
Now, $\tau_i<\tau_j$ if $s_{[i,\rightarrow]}\prec_\sigma s_{[j,\rightarrow]}$. When $j=i+4$, we must decide what occurs. We take $\tau_1<\tau_5$. Since $s_1=0\in T_\sigma^+$, we also have $\tau_2<\tau_6$. Since $s_2=1\in T_\sigma^-$, we have $\tau_3>\tau_7$. Since $s_3=2\in T_\sigma^-$, we obtain $\tau_4<\tau_8$. Taken together, we get $[\tau]=[13852476]$.
\end{example}

\section{Main Theorem}\label{section main theorem}
This section includes the main theorem in this paper, Theorem~\ref{BIJECTION BETWEEN WORDS AND SIGSEG}, which states that, for $\sigma \in \{+,-\}^k$, the set of $\sigma$-segmentations of permutations in $\mathcal C_n^\sigma$ is equinumerous to $\N^*_k(n;\sigma)$. We will use Theorem~\ref{BIJECTION BETWEEN WORDS AND SIGSEG} to enumerate the set of cyclic permutations in several vector grid classes in Sections \ref{section 2-vector} and \ref{section 3-vector}.  

\begin{theorem}\label{BIJECTION BETWEEN WORDS AND SIGSEG}
For $k \geq 2$, $n \geq3$ and $\sigma \in \{+,-\}^k$,
	\[L_k^*(n;\sigma)= \sum_{\pi \in \mathcal C_n^\sigma} |\{\e: \e \text{ is a $\sigma$-segmentation of $\pi$}\}|.\]
Furthermore, the number of necklaces in $\N^*_k(n;\sigma)$ with evaluation $\textbf{a}=(a_0, a_1,\ldots, a_{k-1})$ is equal to the number of permutations $\pi \in\C_n^\sigma$ that admit the $\sigma$-segmentation $\e=(e_0, e_1,\ldots, e_k),$ where $e_0=0$ and $e_i = \displaystyle\sum_{j<i} a_j$ for $i\in[k]$.
\end{theorem}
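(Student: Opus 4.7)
The plan is to produce an explicit bijection $\Phi$ between the multiset of pairs $(\hat\pi,\epsilon)$, where $\hat\pi\in\C_n^\sigma$ and $\epsilon=(e_0,e_1,\ldots,e_k)$ is a $\sigma$-segmentation of $\hat\pi$, and the set of necklaces $N_k(n;\sigma)$. Both assertions then follow together, since the construction will preserve the correspondence between segment sizes $a_i = e_{i+1}-e_i$ and the evaluation $(a_0,\ldots,a_{k-1})$ of the resulting necklace.

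First I would define $\Phi$ in the spirit of the Gessel--Reutenauer bijection. Given such a pair, set $\pi=\theta^{-1}(\hat\pi)$ and assign to each value $v\in[n]$ the label $\ell(v)=i$ where $v=\pi_j$ for the unique $j$ with $e_i<j\le e_{i+1}$. Reading these labels in the cyclic order determined by $\hat\pi$, say starting at $1$, yields a word $s=\ell(1)\,\ell(\hat\pi(1))\,\cdots\,\ell(\hat\pi^{n-1}(1))$, and the necklace $[s]$ is declared to be $\Phi(\hat\pi,\epsilon)$. By construction, the evaluation of $[s]$ equals $(a_0,\ldots,a_{k-1})$.

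Next I would exhibit an inverse using the standard recipe: given $[s]$ with evaluation $(a_0,\ldots,a_{k-1})$, sort the $n$ cyclic rotations of $s$ lexicographically, breaking ties in accordance with the convention that within a block of positions labeled $i$ the values should be assigned in increasing order when $\sigma^i=+$ and in decreasing order when $\sigma^i=-$. The resulting linear order on the $n$ positions supplies $\pi$, and $\hat\pi=\theta(\pi)$ inherits the segmentation with $e_i=\sum_{j<i}a_j$. Showing that these two maps are mutually inverse is then largely formal once the tie-breaking convention is fixed, and it automatically yields the refined ``furthermore'' statement about evaluations.

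The main obstacle is identifying exactly which necklaces appear in the image, since $\hat\pi$ must be a single $n$-cycle and not a product of shorter cycles. If $[s]$ is $r$-periodic with $r>1$, the symmetry of the construction naively factors the reconstructed permutation into $r$ cycles of length $n/r$, so primitive necklaces are the generic contributors. The delicate case is $n$ even and $s=q^2$ with $q$ primitive of length $n/2$: the alternation between ascending and descending sorts in the two copies of $q$ breaks the apparent $(n/2)$-fold symmetry precisely when $o_\sigma(q)$ is odd, while for $o_\sigma(q)$ even the construction produces the square of an $(n/2)$-cycle. Verifying this parity criterion, and ruling out all deeper periodicities $r\ge 3$, is the subtlest step of the argument. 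Once established, the count of admissible necklaces is $L_k(n)+L_k(\tfrac{n}{2};\sigma)=L_k^*(n;\sigma)$, which is the desired identity.
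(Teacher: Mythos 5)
Your forward map is essentially the right one (and is in the spirit of the source the paper cites for this theorem, \cite{ARCHER14}, which the paper imports rather than proves), but the inverse you describe does not invert it, and this is a genuine error rather than a presentational one. Sorting the $n$ cyclic rotations in plain lexicographic order and then distributing values within each block increasingly or decreasingly according to $\sigma^i$ is not equivalent to the comparison that is actually needed: one must compare rotations in the signed ($\sigma$-)order, in which the direction of comparison flips each time a letter of $T^-_\sigma$ is read in the common prefix. A small counterexample already inside the theorem's range: take $\sigma=+-$, $n=4$, and the primitive necklace $[0111]$, whose evaluation is $(1,3)$. The unique $4$-cycle admitting the segmentation $0\le 1\le 4$ is $3421$. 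Your recipe sorts the rotations as $0111<1011<1101<1110$, gives the three positions starting with $1$ the values $4,3,2$ in that lexicographic order, and outputs $\hat\pi=2341$; but the second segment $341$ of $2341$ is not decreasing, so the produced pair is not a valid $(\hat\pi,\epsilon)$ with the claimed evaluation, and the two maps are not mutually inverse. (Two smaller slips in the forward direction: $\theta^{-1}(\hat\pi)$ is not a single permutation, and the $\sigma$-segmentation in the statement segments $\hat\pi$ itself, so the labels must be read off the one-line notation of $\hat\pi$, not of some preimage $\pi$.)

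Separately, the step you defer as "the subtlest" is not a verification that can be waved through: identifying the image as exactly the primitive necklaces together with the $2$-periodic necklaces $[q^2]$ with $o_\sigma(q)$ odd (and ruling out all $r$-periodic necklaces with $r\ge 3$, as well as $2$-periodic ones with $o_\sigma(q)$ even) is precisely what makes the count equal to $L_k(n)+L_k(\frac n2;\sigma)=L_k^*(n;\sigma)$, and with your lexicographic setup it cannot even be formulated, since for a periodic word distinct starting positions yield literally identical rotations and lexicographic sorting gives no ordering of them at all. The argument in the cited source handles this through the signed shift (equivalently, through the $\sigma$-order on infinite periodic words): the point with itinerary $q^\infty$ has its period doubled exactly when $o_\sigma(q)$ is odd, which is where the set $N_k(n;\sigma)$ and the parity condition come from. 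As written, your proposal neither constructs a correct inverse nor establishes the image characterization, so both assertions of the theorem remain unproved.
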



For any $\sigma\in\{+,-\}^k$ and any $n\geq 1$, let us define the set $\SS(n;\sigma)$ as $$\SS(n;\sigma)=\{(\pi,\mathbf{e}) : \pi\in\C_n^\sigma, \ \mathbf{e} \text{ is a $\sigma$-segmentation of $\pi$}\}.$$
That is, $\SS(n;\sigma)$ consists of all cyclic \textit{gridded permutations} \cite{AAVRB,BEVAN2014,BEVAN2015}.  To prove Theorem \ref{BIJECTION BETWEEN WORDS AND SIGSEG}, we construct a bijection from $\N^*_k(n;\sigma)$ to $\SS(n;\sigma)$ that sends a necklace with evaluation $\textbf{a} =(a_0, a_1,\ldots, a_{k-1})$ to some $(\pi, \e)$, where $\pi \in\C_n^\sigma$ admits a $\sigma$-segmentation $\e=(e_0, e_1,\ldots, e_k)$ with $e_0=0$ and $e_i = \sum_{j<i} a_j$ for $i\in[k]$. This bijection  associates a permutation to the necklace by observing the relative order of the elements that comprise the necklace with respect to a certain ordering determined by $\sigma$. This proof requires several lemmas and a few additional definitions.


Let $\mathbb{N}_0$ denote the set of nonnegative integers and let $\mathbb{N}_0^k$ denote the set of sequences with length $k$ and entries from $\mathbb{N}_0$. Suppose $\sigma\in\{+,-\}^k$ and let $\varphi_\sigma:\N^*_k(n;\sigma) \to \S_n\times \mathbb{N}_0^k$ be defined by $\varphi_\sigma([s]) = (\pi,\e)$ where $\pi= \bar\theta \circ \Pi_\sigma([s])$ and given that the evaluation of $[s]$ is $\textbf{a}=(a_0, a_1,\ldots, a_{k-1})$, we have that $\e=(e_0, e_1, \ldots, e_k)$ with $e_0=0$ and $e_i = \sum_{j<i} a_j$ for $i\in[k]$. We will show in Lemmas \ref{lem: ppat} through \ref{lem: inverses} that $\varphi_\sigma([s])$ is indeed an element of $\SS(n;\sigma)$ and that the map $\varphi_\sigma$ is a bijection. First, consider the following two examples that demonstrate that $\varphi_\sigma([s]) \in \SS(n;\sigma)$.

\begin{example}
Suppose $\sigma=+-$ and consider the necklace $[s]=[0010011]$. Notice that $[s]$ has evaluation $\mathbf{a}=(4,3)$ since it has four 0's and three 1's. In Example \ref{ex: prim}, we saw that $\Pi_\sigma([0010011]) = [2471356].$ Thus, $\varphi_\sigma([0010011]) = (\pi, \e)$, where $\pi = (2471356) = 3457621$ and $\e=(0,4,7)$. Notice that $3457621 \in \C^\sigma_7$ since it is increasing, then decreasing, and that $\e$ is a $\sigma$-segmentation of $\pi$. 
\end{example}
\begin{example}
Suppose $\sigma = +--$ and $[s]=[01210121]$, which has evaluation $\mathbf{a}=(2,4,2)$. We saw in Example \ref{ex: not prim} that $\Pi_\sigma([01210121]) = [13852476]$.  Thus, $\varphi_\sigma([01210121]) = (\pi, \e)$, where $\pi = (13852476) =34872165$ and $\e=(0,2, 6,8)$. Notice that $34872165 \in \C^\sigma_8$ since 34 is increasing, 8721 is decreasing, and 65 is decreasing, and that $\e=(0,2,6,8)$ is a $\sigma$-segmentation of $\pi$.
\end{example}
In both examples above, we have that $\varphi_\sigma([s]) \in \SS(n;\sigma)$. We will see in Lemma \ref{lem: ppat} that in general this will happen. 

Given a permutation $\tau$ of length $n$ and a vector $\e=(e_0,e_1,\ldots,e_k)$ with $0=e_0\leq e_1\leq \cdots \leq e_k=n$, let us define the \textit{$\tau$-monotone word $s=s_1s_2\ldots s_n$ induced by $\e$} by letting $s_i = t$ whenever $e_{t}< \tau_i\leq e_{t+1}$. For example, if $\tau=34872165$ and $\e=(0,2,6,8)$, then $s=11220011$ is the $\tau$-monotone word induced by $\e$.  Notice that if the evaluation of $s$ is $\mathbf{a}=(a_0, a_1, \ldots, a_{k-1})$, then $e_i=\sum_{j<i} a_j$ for all $i\in[k]$. We say that $[s]$ is the $[\tau]$-monotone necklace if $s$ is a $\tau$-monotone word for some representatives $s$ and $\tau$ of $[s]$ and $[\tau]$, respectively. 

Let $\sigma\in\{+,-\}^k$ be arbitrary and let $\psi_\sigma:\SS(n;\sigma) \to \overline{\W}_k(n)$ be defined by $\psi_\sigma((\pi, \e)) = [s]$, where $[s]=[s_1s_2\ldots s_n]$ is the $[\tau]$-monotone necklace for $[\tau]=\bar\theta^{-1}(\pi)$.
We will show that $\varphi_\sigma$ gives a bijection from $\N^*_k(n;\sigma)$ onto $\SS(n;\sigma)$ and that $\psi_\sigma$ is the inverse of $\varphi_\sigma$. 

\begin{example}
Suppose $\sigma = ++-$. Let us consider the cyclic permutation $\pi = 46723581= (14265378)\in \C_8^\sigma$ together with the $\sigma$-segmentation $\e=(0,3,7,8)$. Then $\psi_\sigma((\pi,\e))$ will have three 0's, four 1's, and one 2. The cycle notation of $\pi$ informs us where to place each letter. Thus, we obtain $\psi_\sigma((\pi,\e))=[01011012]$. Notice that $\varphi_\sigma([01011012]) = (\pi,\e)$. If we consider instead the $\sigma$-segmentation $\mathbf{f}=(0,3,6,8)$ of $\pi$, then we obtain $\psi_\sigma((\pi,\mathbf{f}))=[01011022]$. Again, we get that $\varphi_\sigma([01011022]) = (\pi,\mathbf{f})$.
\end{example}

\begin{lemma} \label{lem: ppat}
Suppose $n,k\geq 1$ and $\sigma\in\{+,-\}^k$. Let $[s]\in \N^*_k(n;\sigma)$. Then $\varphi_\sigma([s]) \in \SS(n;\sigma)$. 
\end{lemma}

\begin{proof}
Let $\Pi_\sigma(s)=\tau$ and $\varphi_\sigma(s)=(\pi,\e)$, where $\e=(e_0,e_1,\ldots,e_k)$.  It is clear from the definition of $\Pi_\sigma$ that for all $a,b\in[n]$, $\tau_a<\tau_b$ implies $s_a\le s_b$. It follows that for $0\leq t<k$, $s_i=t$ if and only if $e_{t} < \tau_i\le e_{t+1}$. %
Now, suppose that $e_{t} < \tau_i<\tau_j\leq e_{t+1}$, and so $s_{[i, \rightarrow]} \preceq_\sigma s_{[j,\rightarrow]}$ with $s_i = s_j=t$.
If $t \in T^+_\sigma$, then $s_{[i+1, \rightarrow]} \prec_\sigma s_{[j+1,\rightarrow]}$, and so $\tau_{i+1}<\tau_{j+1}$.
Similarly, if $t \in T^-_\sigma$, then $s_{[j+1,\rightarrow]} \prec_\sigma s_{[i+1,\rightarrow]}$, and so $\tau_{i+1}>\tau_{j+1}$.

Now let $0\le t<k$, and suppose that the indices $j$ such that $s_j=t$ are $j_1,\dots, j_m$, ordered in such a way that $\tau_{j_1}< \tau_{j_2}<\dots< \tau_{j_m}$,
where $m=e_{t+1}-e_{t}$. Then $\tau_{j_\ell}=e_{t-1}+\ell$ for $1\le\ell\le m$,
and by the previous paragraph, we also have that $\tau_{j_1+1}< \tau_{j_2+1}<\dots< \tau_{j_m+1}$ if $t\in T^+_\sigma$,
and $\tau_{j_1+1}> \tau_{j_2+1}>\dots> \tau_{j_m+1}$ if $t\in T^-_\sigma$.
Using that $\tau_{j_\ell+1}=\pi_{\tau_{j_\ell}}=\pi_{e_{t}+\ell}$, this is equivalent to
$\pi_{e_{t}+1}< \pi_{e_{t}+2}<\dots<\pi_{e_{t}+m}$ if $t\in T^+_\sigma$,
and $\pi_{e_{t}+1}>\pi_{e_{t}+2}>\dots>\pi_{e_{t}+m}$ if $t\in T^-_\sigma$. Note that $e_{t}+m = e_{t+1}$, so this condition states that $\e$ is a $\sigma$-segmentation of $\pi$.
Since $\pi$ is a cyclic permutation, this proves that $\pi\in\C^\sigma$.
\end{proof}


\begin{lemma}\label{lem: Csigma}
Let $\sigma\in\{+,-\}^k$ be arbitrary, let $\tau\in\S_n$ with $\pi = \theta(\tau)\in \C^\sigma$, and suppose that
$\e=(e_0, e_1, \ldots, e_k)$ is a $\sigma$-segmentation of $\pi$.
Suppose that $e_{t} < \tau_i<\tau_j\leq e_{t+1}$ for some $1\leq i,j\leq n$. Then $\tau_{i+1}<\tau_{j+1}$ if $t \in T^+_\sigma$, and $\tau_{i+1}>\tau_{j+1}$ if $t \in T^-_\sigma$,
where we let $\tau_{n+1}:=\tau_1$.
\end{lemma}

\begin{proof}
Since $e_{t} < \tau_i<\tau_j\leq e_{t+1}$, both $\pi_{\tau_i}$ and $\pi_{\tau_j}$ lie in the segment $\pi_{e_{t}+1}\dots \pi_{e_{t+1}}$. If $t \in T^+_\sigma$, this segment is increasing,
so $\tau_{i+1}=\pi_{\tau_i}<\pi_{\tau_j}=\tau_{j+1}$. The argument is analogous if $t \in T^-_\sigma$.
\end{proof}

\begin{lemma}\label{lem: psi}
Suppose $n,k\geq 1$ and $\sigma \in \{+,-\}^k$. Let $\pi \in \C_n^\sigma$ and $\e$ be a $\sigma$-segmentation of $\pi$. Then $\psi_\sigma((\pi,\e)) \in \N^*_k(n;\sigma)$. 
\end{lemma}
\begin{proof}
Let $\psi_\sigma((\pi,\e))=[s]$ and $\tau=\theta^{-1}(\pi)$.  Assume that $\e=(e_0,e_1,\ldots,e_k)$.  If $s$ is primitive, we are done. Suppose that $s$ is not primitive, so it can be written as $q^m$ for some $m\ge 2$ and some primitive word $q$ with $|q|=r=\frac{n}{m}$.
Then, $s_i = s_{i+r}$ for all $i$ (using addition modulo $n$ in the index for the remainder of the proof). 
Fix $i$, and let $t=s_i = s_{i+r}$. Because of the way that $s$ is defined, we must have $e_t< \tau_i,\tau_{i+r}\leq e_{t+1}$, so we can apply Lemma~\ref{lem: Csigma} to this pair.

Suppose first that $g=o_\sigma(q)$ is even.
If $\tau_i<\tau_{i+r}$, then applying Lemma~\ref{lem: Csigma} $r$ times we get $\tau_{i+r}<\tau_{i+2r}$,
since the inequality involving $\tau_{i+\ell}$ and $\tau_{i+r+\ell}$ switches exactly $g$ times as $\ell$ increases from $0$ to $r$.  Starting with $i=1$ and applying this argument repeatedly, we see that if $\tau_1<\tau_{1+r}$, then
$$\tau_1<\tau_{1+r}<\tau_{1+2r}<\dots <\tau_{1+(m-1)r}<\tau_{1+mr}=\tau_1,$$ which is a contradiction.
A symmetric argument shows that if $\tau_1>\tau_{1+r}$, then $$\tau_1>\tau_{1+r}>\tau_{1+2r}>\dots >\tau_{1+(m-1)r}>\tau_{1+mr}=\tau_1,$$ also a contradiction.

It remains to consider the case that $g=o_\sigma(q)$ is odd. If $m$ is even and $m\ge4$, then letting $q' = q^2$ we have $s = (q')^{\frac{m}{2}}$.
Letting $r'=|q'|=2r$ and $g'= o_\sigma(q')=2g$, the same argument as above using $r'$ and $g'$ yields a contradiction.
If $m$ is odd, suppose without loss of generality that $\tau_1<\tau_{1+r}$.
Note that applying Lemma~\ref{lem: Csigma} $r$ times to the inequality $\tau_i<\tau_{i+r}$ (respectively $\tau_i>\tau_{i+r}$) yields $\tau_{i+r}>\tau_{i+2r}$ (respectively $\tau_{i+r}<\tau_{i+2r}$) in this case, since the inequality involving $\tau_{i+\ell}$ and $\tau_{i+r+\ell}$ switches an odd number of times.
Consider two cases:
\begin{itemize}
\item If $\tau_{1}<\tau_{1+2r}$, then Lemma~\ref{lem: Csigma} applied repeatedly in blocks of $2r$ times yields $\tau_1<\tau_{1+2r}<\tau_{1+4r}<\dots< \tau_{1+(m-1)r}$.
Applying now Lemma~\ref{lem: Csigma} $r$ times starting with $\tau_1<\tau_{1+(m-1)r}$ gives $\tau_{1+r}>\tau_{1+mr}=\tau_1$, which contradicts the assumption $\tau_1<\tau_{1+r}$.

\item If $\tau_{1}>\tau_{1+2r}$, applying Lemma~\ref{lem: Csigma} $r$ times we get $\tau_{1+r}<\tau_{1+3r}$, and by repeated application of the lemma in blocks of $2r$ times it follows that
$\tau_{1+r}<\tau_{1+3r}<\tau_{1+5r}<\dots< \tau_{1+(m-2)r}<\tau_{1+mr}=\tau_1$, contradicting again the assumption $\tau_1<\tau_{1+r}$.
\end{itemize}

The only case left is when $g$ is odd and $m = 2$, that is, when $s_1s_2\dots s_n = q^2$ and $q$ has an odd number of letters in $T^-_\sigma$. Thus, we have shown that $[s]\in \N^*_k(n;\sigma)$.
\end{proof}

\begin{lemma}\label{lem: inverses}
Suppose $k\geq 1$ and $\sigma \in \{+,-\}^k$.  Then $\varphi_\sigma$ is a bijection from $\N^*_k(n;\sigma)$ onto $\SS(n;\sigma)$, and its inverse is $\psi_\sigma$. 
\end{lemma}

\begin{proof}
Fix $n\geq 1$. We will first show that for a given $\pi\in \C_n^\sigma$ and a $\sigma$-segmentation $\e$ of $\pi$, we have that $(\pi,\e) = \varphi_\sigma(\psi_\sigma((\pi,\e)))$. Let $\tau\in\S_n$ so that $\theta(\tau)=\pi$ and let $s=s_1s_2\ldots s_n$ be the $\tau$-monotone word induced by the $\sigma$-segmentation $\e$. 
We need to show that $\Pi_\sigma([s])=[\tau]$. 

Suppose that $\tau_i<\tau_j$. If $s_{[i,\rightarrow]}\neq s_{[j,\rightarrow]}$, let $a\ge0$ be the smallest such that $s_{i+a}\neq  s_{j+a}$,
and let $h=|\{0\le \ell \le a-1 : s_{i+\ell} \in T^-_\sigma\}|$.
 If $h$ is even, then Lemma \ref{lem: Csigma} applied $a$ times shows that $\tau_{i+a}<\tau_{j+a}$. Since $s_{i+a}\neq s_{j+a}$, we must then have $s_{i+a} < s_{j+a}$ because of $\tau$-monotonicity.
Thus, $s_{[i,\rightarrow]}\prec_\sigma s_{[j,\rightarrow]}$ by definition of $\prec_\sigma$, since the word $s_is_{i+1}\dots s_{i+a-1}= s_js_{j+1}\dots s_{j+a-1}$ has an even number of letters in $T^-_\sigma$.
 Similarly, if $h$ is odd, then Lemma \ref{lem: Csigma} shows that  $\tau_{i+a}>\tau_{j+a}$. Since $s_{i+a}\neq s_{j+a}$, we must have $s_{i+a} > s_{j+a}$, and thus $s_{[i,\rightarrow]}\prec_\sigma s_{[j,\rightarrow]}$
by definition of $\prec_\sigma$.
If $s$ is primitive, the case $s_{[i,\rightarrow]}=s_{[j,\rightarrow]}$ can never occur when $i\neq j$, and so $\pi_i<\pi_j$ if and only if $s_{[i,\rightarrow]} \prec_\sigma s_{[j,\rightarrow]}$. 
It follows that $\Pi_\sigma([s])=[\tau]$.

 Let $r:=\frac{n}{2}$.
 If $s$ is not primitive, we can only have $s_{[i,\rightarrow]}=s_{[j,\rightarrow]}$ if $i = j\pm r$. Suppose $\rho=\Pi_\sigma([s])$. In this case, we have that $\rho_1<\rho_{1+r}$, and for $1<i\leq r$, $\rho_i<\rho_{i+r}$ exactly when either $\rho_{i-1}<\rho_{i+r-1}$ with $s_{i-1}\in T^+_\sigma$ or  $\rho_{i-1}>\rho_{i+r-1}$ with $s_{i-1}\in T^-_\sigma$. Thus, if $\tau_1<\tau_{1+r}$, we get $\rho=\tau$. If $\tau_1>\tau_{1+r}$, we get $\rho=\tau_{r+1}\ldots \tau_n\tau_1\ldots\tau_r$, a cyclic rotation of $\tau$. Either way, $[\rho]=[\tau]$ and so $\Pi_\sigma([s])=[\tau]$.  

Let $[s]\in\N^*_k(n;\sigma)$ with evaluation $\mathbf{a}=(a_0, a_1, \ldots, a_{k-1})$. We will see that $\psi_\sigma(\varphi_\sigma([s]))=[s]$. Let $[\tau]=\Pi_\sigma([s])$ and let $\e=(e_0,e_1,\ldots,e_k)$ be the partial sums given by $e_0=0$ and $e_i=\sum_{j<i}a_j$ for $i\in[k]$. It is enough to show that the $[\tau]$-monotone necklace induced by $\e$ is $[s]$. However, this is clear from the first paragraph of the proof of Lemma \ref{lem: ppat}.
\end{proof}

\begin{proof}[Proof of Theorem \ref{BIJECTION BETWEEN WORDS AND SIGSEG}]
Since the left hand side of the formula is the size of the set $\N^*_k(n;\sigma)$ and the right hand side is the size of the set $\SS(n;\sigma)$, Lemma \ref{lem: inverses} implies the equality. Since $\varphi_\sigma$ sends necklaces with evaluation $\mathbf{a}=(a_0, a_1,\ldots, a_{k-1})$ to permutations $\pi \in\C_n^\sigma$ and the associated $\sigma$-segmentation $\e=(e_0, e_1, \ldots, e_k)$ with $e_0=0$ and $e_i = \displaystyle\sum_{j<i} a_j$ for $i\in[k]$, we are done.
\end{proof}

\section{Cycles in length 2 vector grid classes}\label{section 2-vector}

Here, we provide the enumeration of $\C_n^{++}$, $\C_n^{+-}$, $\C_n^{-+}$, and $\C_n^{--}$ as corollaries of Theorem \ref{BIJECTION BETWEEN WORDS AND SIGSEG}. These theorems also appear in \cite{gessel, ARCHERELIZALDE2014}. 
We also enumerate the number of permutations in $\C_n^{+-}$ with a peak at $i$, which will be useful for enumerating $\C_n^\sigma$ for $\sigma\in\{+,-\}^3$ in Section \ref{section 3-vector}.

\subsection{Enumerating $\C_n^{++}$, $\C_n^{+-}$, $\C_n^{-+}$, and $\C_n^{--}$}

\begin{theorem}\label{thm: ++ and --}
If $n\geq 2$, then $|\C_n^{++}| = L_2(n)$ and $|\C_n^{--}| = L_2^*(n;--).$
\end{theorem}

\begin{proof}
First suppose $\sigma=++$ and $\pi\in\C_n^{\sigma}$. Then $\pi$ is a cyclic permutation with at most one descent. Since $\pi$ is cyclic and $n\geq 2$, $\pi$ must have exactly one descent. Suppose the descent of $\pi$ occurs at position $i$. Then there is only one $\sigma$-segmentation of $\pi$, namely $(0,i,n)$. Thus $|\C_n^{++}| = L^*_2(n;\sigma)$. Since $T_\sigma^-=\varnothing$ in this case, we obtain $|\C_n^{++}| = L_2(n)$. 

Similarly, if $\pi\in\C_n^{--}$, then $\pi$ has exactly one ascent and thus one $\sigma$-segmentation. Thus the result follows.
\end{proof}

\begin{theorem}
If $n\geq 2$, then $$|\C_n^{+-}| =|\C_n^{-+}| = \frac{L_2^*(n;+-)}{2}.$$
\end{theorem}
\begin{proof}
By Proposition~\ref{REVERSE SAME}, it is enough to consider $|\C_n^{+-}|$ only. If $\pi \in \C_n^{+-}$ and $\pi$ has a peak at position $i$, then there are two $\sigma$-segmentations of $\pi$, namely $(0, i-1, n)$ and $(0, i, n)$. Since this is true for all $\pi\in\C_n^{+-}$, the result follows.
\end{proof}

\subsection{Unimodal cycles with a given peak}

In this section, we enumerate the number of cyclic unimodal permutations (i.e., those permutations in $\C_n^{+-}$) with its peak in a given position.
We let $\bar\sigma = +-$ and denote by $\Lambda(n)$ the size of the set $\C_n^{\bar\sigma}$. 

Recall that we say there is a peak at $i$ if $\pi_i>\pi_{i-1}$ and $\pi_i>\pi_{i+1}$, or if $i=1$ and $\pi_1>\pi_2$, or if $i=n$ and $\pi_{n-1}<\pi_n$. 
Let $\Lambda(n,i)$ denote the number of cyclic permutations in $\C_n^{\bar\sigma}$ with a peak at position $i$ (i.e., the number of permutations $\pi\in \C^{\bar\sigma}_n$ with $\pi_i = n$). 
In order to enumerate the cyclic permutations in the $++-$- and $+--$-classes in Section~\ref{section 3-vector}, we will find $\Lambda(n,i)$, whose formula is given in Lemma~\ref{UP DOWN PEAK}.

Let $L_2(n,i)$ denote the number binary Lyndon words of length $n$ with $i$ 1's,  whose formula is well-known. Using M\"obius inversion, one can find that $$L_2(n,i) = \frac{1}{n} \sum_{d\mid\gcd(n,i)} \mu(d) {{n/d}\choose{i/d}}.$$

\begin{lemma}\label{WORD SYMMETRY}
If $n\geq 1$ and $0\leq i \leq n$, then $L_2(n,i)=L_2(n,n-i).$
\end{lemma}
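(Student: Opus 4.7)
The plan is to prove the identity bijectively via the involution that swaps $0$s and $1$s in each word, as the hint in the text suggests. This is the cleanest route, because counting binary Lyndon words of a given length and weight is equivalent to counting primitive binary necklaces of that length and weight, and bitwise complementation commutes with cyclic rotation, so it descends to an involution on necklaces.

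First I would work at the level of necklaces: for a primitive necklace $[s]$ of length $n$ with $i$ ones, define $\phi([s]) = [\bar s]$, where $\bar s_j = 1 - s_j$ for each position $j$. Since complementation commutes with cyclic shift, $\phi$ is well-defined on equivalence classes. Next I would verify that $\phi$ preserves primitivity: if $\bar s$ were $r$-periodic, say $\bar s = q^r$ for some $r > 1$, then $s = \bar q^{\,r}$ would also be $r$-periodic, contradicting primitivity of $[s]$. The number of ones in $\bar s$ is clearly $n - i$, and $\phi \circ \phi$ is the identity, so $\phi$ is a bijection between primitive necklaces of length $n$ with $i$ ones and primitive necklaces of length $n$ with $n - i$ ones. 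Translating back to Lyndon words (each primitive necklace corresponds to exactly one Lyndon word of the same length and weight, namely its lex-smallest rotation) yields $L_2(n, i) = L_2(n, n-i)$.

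The only technicality is that complementing a Lyndon word does not in general produce a Lyndon word; this is bypassed by working with necklace classes (or, equivalently, by post-composing $\phi$ with the rotation to the lex-minimal representative). As a sanity check, one can also verify the identity directly from the Möbius formula stated immediately above the lemma: since $\gcd(n, n-i) = \gcd(n, i)$ and $\binom{n/d}{(n-i)/d} = \binom{n/d}{i/d}$ for every common divisor $d$, the two sums agree term by term. There is no substantive obstacle here; the lemma is essentially immediate once one commits to working with primitive necklaces rather than Lyndon representatives.
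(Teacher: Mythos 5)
Your proof is correct and follows the same idea the paper uses (interchanging zeros and ones in each word), with the added care of passing to primitive necklaces so that complementation is well-defined despite not preserving the Lyndon property. Nothing further is needed.
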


\begin{proof}
By interchanging the 0's and 1's in each word, we obtain the result.
\end{proof}

\begin{lemma}\label{SUM PEAKS}
If $n\geq 1$ and $i \in [n-1]$, then
	\[\Lambda(n,i)+\Lambda(n,i+1)= L_2(n,i) \]
except in the case when $n$ is even and $n+i \equiv 2 \!\pmod 4;$ in this case,
	  \[\Lambda(n,i)+\Lambda(n,i+1)= L_2(n,i)+L_2\Big(\frac n2,\frac i2\Big). \]
\end{lemma}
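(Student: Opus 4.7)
The plan is to apply the evaluation-specific part of Theorem~\ref{BIJECTION BETWEEN WORDS AND SIGSEG} with $k=2$ and $\bar\sigma=+-$, and compare the count of necklaces of a given evaluation with the sum $\Lambda(n,i)+\Lambda(n,i+1)$.

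First I would determine the possible $\bar\sigma$-segmentations of a cyclic unimodal permutation. If $\pi\in \C_n^{\bar\sigma}$ has its peak at position $p$, then $\pi_1<\cdots<\pi_p>\pi_{p+1}>\cdots>\pi_n$, and a segmentation $0\le e_1\le n$ requires $\pi_1<\cdots<\pi_{e_1}$ and $\pi_{e_1+1}>\cdots>\pi_n$. The first condition forces $e_1\le p$ and the second forces $e_1\ge p-1$, so the only valid choices are $e_1\in\{p-1,p\}$. Consequently, the cyclic unimodal permutations admitting a $\bar\sigma$-segmentation with $e_1=i$ are exactly those with peak at position $i$ or $i+1$, and these families are disjoint; their total count is $\Lambda(n,i)+\Lambda(n,i+1)$. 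By Theorem~\ref{BIJECTION BETWEEN WORDS AND SIGSEG}, this equals the number of necklaces in $N_2(n;\bar\sigma)$ with evaluation $(i,n-i)$.

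Next I would count those necklaces. The primitive part contributes the number of primitive binary necklaces of length $n$ with $n-i$ ones, which equals $L_2(n,n-i)=L_2(n,i)$ by Lemma~\ref{WORD SYMMETRY}. The remaining contribution comes from $2$-periodic necklaces $[q^2]$ with $q$ a primitive word of length $n/2$ and $o_{\bar\sigma}(q)$ odd. The condition that $[q^2]$ has evaluation $(i,n-i)$ forces $i$ to be even, in which case $q$ has evaluation $(i/2,(n-i)/2)$; the oddness condition then says $(n-i)/2$ is odd, i.e., $n-i\equiv 2\pmod 4$. For even $i$, this is equivalent to $n+i\equiv 2\pmod 4$, matching the exception in the statement. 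When this holds, the count of admissible $q$'s is $L_2(n/2,(n-i)/2)=L_2(n/2,i/2)$ (again using Lemma~\ref{WORD SYMMETRY}); otherwise there is no contribution.

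I expect the mildly fiddly step to be the bookkeeping at the end: tracking when the $2$-periodic necklaces exist (requiring $n$ even, $i$ even, and the $o_{\bar\sigma}$-parity condition) and verifying that these three constraints condense precisely into ``$n$ even and $n+i\equiv 2\pmod 4$.'' Once this is confirmed, the two cases in the lemma fall out immediately from the decomposition $N_2(n;\bar\sigma)=\text{(primitive necklaces)}\cup\text{(admissible $2$-periodic necklaces)}$.
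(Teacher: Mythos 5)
Your proposal is correct and follows essentially the same route as the paper: both apply the evaluation-refined part of Theorem~\ref{BIJECTION BETWEEN WORDS AND SIGSEG} with $\bar\sigma=+-$, identify the permutations admitting the segmentation $0\le i\le n$ as exactly those with peak at $i$ or $i+1$, and then count the necklaces of evaluation $(i,n-i)$ by splitting into the primitive part $L_2(n,n-i)=L_2(n,i)$ and the $2$-periodic part $L_2(\frac n2,\frac{n-i}2)=L_2(\frac n2,\frac i2)$ via Lemma~\ref{WORD SYMMETRY}. Your parity bookkeeping showing that the $2$-periodic contribution occurs precisely when $n$ is even and $n+i\equiv 2\bmod 4$ is the same condition the paper uses, just spelled out in more detail.
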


\begin{proof}
Consider the second statement in Theorem~\ref{BIJECTION BETWEEN WORDS AND SIGSEG} for $\bar\sigma = +-$. It implies that the number of necklaces in $\N^*_2(n;\bar\sigma)$ with $i$ zeros and $n-i$ ones is equal to the number of cyclic permutations in $\C_n^{\bar\sigma}$ that admit the $\bar\sigma$-segmentation $(0,i,n)$. These are exactly the permutations with a peak at $i$ or $i+1$. In the case when either $n$ is odd or $n$ is even and $n+i \not\equiv 2\!\pmod 4$, the number of necklaces in $\N^*_2(n;\bar\sigma)$ with $i$ zeros and $n-i$ ones is exactly $L_2(n,n-i)$. When $n$ is even and $n+i\equiv 2 \!\pmod 4$, the number of such necklaces is $L_2(n,n-i) + L_2(\frac{n}{2}, \frac{n-i}{2})$. Since $L_2(n,n-i) = L_2(n,i)$ and $L_2(\frac{n}{2}, \frac{n-i}{2}) = L_2(\frac{n}{2}, \frac{i}{2})$ by Lemma~\ref{WORD SYMMETRY}, the result follows.
\end{proof}

\begin{lemma}\label{SYMMETRY LEMMA}
If $n\geq 1$ and $n \not\equiv 2\!\pmod 4$, then for $i \in [n]$, $$\Lambda(n,i)=\Lambda(n,n-i+1).$$ 
\end{lemma}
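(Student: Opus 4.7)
The plan is to reduce the claim to a base case via a simple telescoping identity. Define $f(i) := \Lambda(n,i) - \Lambda(n,n-i+1)$; the goal is to show $f(i) = 0$ for all $i \in [n]$, and the strategy is to combine Lemma~\ref{SUM PEAKS} with the symmetry $L_2(n,i) = L_2(n,n-i)$ from Lemma~\ref{WORD SYMMETRY} to deduce the two-term recurrence $f(i) = -f(i+1)$, and then check that $f(1) = 0$ under the hypothesis $n \not\equiv 2 \bmod 4$.

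First I would establish, for every $i \in [n-1]$, the identity
$$\Lambda(n,i) + \Lambda(n,i+1) = \Lambda(n,n-i) + \Lambda(n,n-i+1).$$
When $n$ is odd this is immediate from Lemma~\ref{SUM PEAKS} (the non-exceptional clause applies on both sides) together with $L_2(n,i) = L_2(n,n-i)$. When $n \equiv 0 \bmod 4$---the only other case compatible with $n \not\equiv 2 \bmod 4$---the exceptional clause of Lemma~\ref{SUM PEAKS} is triggered at index $i$ precisely when $i \equiv 2 \bmod 4$, and at index $n-i$ precisely when $n-i \equiv 2 \bmod 4$; since $n \equiv 0 \bmod 4$ these two conditions coincide, and in that subcase both sides acquire correction terms $L_2(n/2, i/2)$ and $L_2(n/2,(n-i)/2)$ that are equal by a second application of Lemma~\ref{WORD SYMMETRY}. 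Rearranging gives $f(i) + f(i+1) = 0$, so $f(i) = (-1)^{i-1} f(1)$ for every $i \in [n]$.

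For the base case, I would argue directly that $f(1) = \Lambda(n,1) - \Lambda(n,n) = 0$. When $n = 1$ this is trivial. For $n \geq 3$, any unimodal permutation with $\pi_1 = n$ must be the reversal $n(n-1)\cdots 1$, which is a product of transpositions and therefore not a single $n$-cycle, so $\Lambda(n,1) = 0$; similarly, any unimodal permutation with $\pi_n = n$ must be the identity, which is not cyclic for $n \geq 2$, so $\Lambda(n,n) = 0$. Hence $f(1) = 0$, and the recurrence propagates this to all $i \in [n]$. The main delicacy I anticipate is the bookkeeping for the exceptional clause of Lemma~\ref{SUM PEAKS}: one must verify that both its triggering condition and the value of its correction term behave symmetrically under $i \leftrightarrow n-i$, which as noted above relies crucially on the hypothesis that $n$ is either odd or divisible by $4$.
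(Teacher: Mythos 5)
Your proposal is correct and is essentially the paper's own argument: the paper also proves the two-term relation $\Lambda(n,i-1)+\Lambda(n,i)=\Lambda(n,n-i+1)+\Lambda(n,n-i+2)$ from Lemma~\ref{SUM PEAKS} and Lemma~\ref{WORD SYMMETRY} (splitting into $n$ odd and $n\equiv 0\bmod 4$, matching the exceptional correction terms exactly as you do) and then inducts on $i$ from the base case $\Lambda(n,1)=0=\Lambda(n,n)$. Your telescoping formulation $f(i)=(-1)^{i-1}f(1)$ is just a repackaging of that induction, with a slightly more explicit justification of the base case.
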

\begin{proof} The cases when $n\leq 2$ are easily checked. 
Now let $n>2$. If $i=1$, then $\Lambda(n,1)=0=\Lambda(n,n)$. We proceed by induction on $i$.  Assume $\Lambda(n,i-1)=\Lambda(n,n-i+2)$ for some $i \in\{2,3,\ldots,n-1\}$.  First suppose $n$ is odd. Since $L_2(n,i-1)=L_2(n,n-i+1)$, applying Lemma~\ref{SUM PEAKS} twice and Lemma~\ref{WORD SYMMETRY} we obtain
	\begin{eqnarray*} \Lambda(n,i-1)+\Lambda(n,i)&=&L_2(n,i-1) \\
		&=& L_2(n,n-i+1)\\
		&=&\Lambda(n,n-i+1)+\Lambda(n,n-i+2).
	\end{eqnarray*}
The result now follows from our inductive hypothesis.

Now suppose $n \equiv 0 \!\pmod 4$. If $i-1 \not\equiv 2\!\pmod 4$, then the above argument can be made by again using Lemma~\ref{SUM PEAKS} twice and Lemma~\ref{WORD SYMMETRY} once. If $i-1 \equiv 2\!\pmod 4$, notice that $n-i+1 \equiv 2 \!\pmod 4$ as well. Therefore, using the same lemmas, 
\begin{align*}\Lambda(n,i-1)+\Lambda(n,i)&=L_2(n,i-1) + L_2\bigg(\frac{n}{2}, \frac{i-1}{2}\bigg) \\
&=L_2(n,n-i+1) + L\bigg(\frac{n}{2}, \frac{n-i+1}{2}\bigg) \\
&=\Lambda(n,n-i+1)+\Lambda(n,n-i+2).\end{align*}
The result follows from our inductive hypothesis.
\end{proof}

\begin{lemma}\label{UP DOWN PEAK}
Let $n \geq 3$ and $i\in[n]$. If $n$ is odd, then
	\[\Lambda(n,i)= \sum_{j=1}^{i-1} (-1)^{i+j+1} L_2(n,j),\]
	and if $n$ is even, then
\[	\Lambda(n,i)=	 \sum_{j=1}^{i-1} (-1)^{i+j+1} L_2(n,j) + (-1)^{i+1}\!\!\sum_{\substack{k<i\\4|(n+k+2)}}\! L_2 \bigg(\frac n2,\frac k2\bigg).\]
\end{lemma}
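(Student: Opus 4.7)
The plan is to prove both formulas simultaneously by induction on $i$, using Lemma~\ref{SUM PEAKS} as the recurrence. Rewriting that lemma as
\[
\Lambda(n,i+1) = L_2(n,i) - \Lambda(n,i) + \varepsilon(n,i),
\]
where $\varepsilon(n,i) = L_2(n/2,i/2)$ when $n$ is even and $n+i\equiv 2\bmod 4$, and $\varepsilon(n,i) = 0$ otherwise, reduces the problem to substituting the inductive hypothesis and collecting terms. The base case $i=1$ is immediate: a unimodal permutation with $\pi_1 = n$ must be strictly decreasing, and for $n\geq 2$ the decreasing permutation $n(n-1)\cdots 1$ is not an $n$-cycle, so $\Lambda(n,1) = 0$, matching the empty sum in both formulas.

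For the inductive step in the odd case, $\varepsilon(n,i) \equiv 0$, so I would compute
\[
\Lambda(n,i+1) = L_2(n,i) - \sum_{j=1}^{i-1}(-1)^{i+j+1}L_2(n,j) = \sum_{j=1}^{i-1}(-1)^{(i+1)+j+1}L_2(n,j) + L_2(n,i),
\]
and observe that $L_2(n,i) = (-1)^{(i+1)+i+1}L_2(n,i)$, so the trailing term extends the sum to $j\leq i$, exactly matching the formula at index $i+1$.

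For the even case, I would separate into two subcases according to whether $n+i\equiv 2\bmod 4$. The key observation is that the divisibility condition $4\mid(n+k+2)$ evaluated at $k=i$ is precisely the condition $n+i\equiv 2\bmod 4$, which is exactly when $\varepsilon(n,i) \neq 0$. Consequently, including or excluding the term $k=i$ in the correction sum corresponds perfectly to whether the extra $L_2(n/2,i/2)$ arises from Lemma~\ref{SUM PEAKS}. When $n+i\equiv 2\bmod 4$ with $n$ even, one checks that $i$ is even (so $i/2$ is a legitimate index), and the sign flip from $(-1)^{i+1}$ to $(-1)^{(i+1)+1} = (-1)^{i+2}$ is compatible because $(-1)^{i+2} = 1$ for even $i$; this ensures the new term enters the $k$-sum with coefficient $+1$, matching $\varepsilon(n,i)$.

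The main obstacle is purely bookkeeping: aligning the sign flips on both $L_2(n,j)$ and the correction sum as $i$ increases by one, and verifying the divisibility/parity conditions match in lockstep with the case split in Lemma~\ref{SUM PEAKS}. There are no new ideas beyond the inductive setup; the formula is essentially just the unfolded telescoping of the two-case recurrence.
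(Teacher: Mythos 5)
Your proposal is correct and follows essentially the same route as the paper: induction on the peak position using Lemma~\ref{SUM PEAKS} as the recurrence, with the even case split according to whether $n+i\equiv 2\bmod 4$, and the observation that this condition is exactly the condition $4\mid(n+k+2)$ at $k=i$ so that the correction term enters the $k$-sum with the right (positive) sign; the paper merely indexes the step as passing from $i-1$ to $i$ rather than $i$ to $i+1$. The only quibble is your base-case remark that the decreasing permutation is not an $n$-cycle for $n\geq 2$ (it is one for $n=2$), but this edge case is immaterial here and is glossed over in the paper's proof as well.
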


\begin{proof}
Observe that $\Lambda(n,i)$ is only defined for $i \in [n]$.  Since $\Lambda(n,1)=0$ the result is clear for $i =1$. Thus, we assume $i >1$ and induct on $i$ with $i \in \{2,3, \ldots, n-1\}$.  If $i=2$, then from Lemma~\ref{SUM PEAKS} we obtain the equality
	\[\Lambda(n,1)+\Lambda(n,2)=L_2(n,1)\]
and the result holds.  Thus, we assume the result holds true for all permissible integers less than $i$.  First suppose $n$ is an odd integer.  It follows that 
	\[\Lambda(n,i-1)+\Lambda(n,i)=L_2(n,i-1)\]
by Lemma~\ref{SUM PEAKS}; therefore 
	\begin{eqnarray*}
		\Lambda(n,i) & = & L_2(n,i-1)-\Lambda(n,i-1)\\
			& = & L_2(n,i-1)-\sum_{j=1}^{i-2}(-1)^{i+j} L_2(n,j)\\ 
			& = & L_2(n,i-1)+\sum_{j=1}^{i-2}(-1)^{i+j+1} L_2(n,j)\\ 
			& = & \sum_{j=1}^{i-1}(-1)^{i+j+1} L_2(n,j),\\ 
	\end{eqnarray*}
where the second equality holds by inductive hypothesis. 

Now assume that $n$ is even and $i$ satisfies $n+i-1 \equiv 2 \!\pmod 4$. Then 
		\[\Lambda(n,i-1)+\Lambda(n,i)=L_2(n,i-1) + L_2\bigg(\frac{n}{2}, \frac{i}{2}\bigg)\]
by Lemma~\ref{SUM PEAKS}.
Following a similar argument as above, 

\vspace{6pt}

	\begin{tabular}{rcl}
		$\Lambda(n,i)$ & $=$ & $\ds L_2(n,i-1)-\Lambda(n,i-1) + L_2\bigg(\frac{n}{2}, \frac{i-1}{2}\bigg)$\\
			& $=$ & $\ds L_2(n,i-1)-\sum_{j=1}^{i-2}(-1)^{i+j} L_2(n,j)$ \\
			 & & $\ds - (-1)^i\!\!\sum_{\substack{k<i-1\\4|(n+k+2)}}\! L_2 \bigg(\frac n2,\frac k2\bigg)+ L_2\bigg(\frac{n}{2}, \frac{i-1}{2}\bigg)$\\ 
			& $=$ & $\ds L_2(n,i-1)+\sum_{j=1}^{i-2}(-1)^{i+j+1} L_2(n,j) $\\ 
			& & $\ds + (-1)^{i+1}\!\!\sum_{\substack{k<i\\4|(n+k+2)}}\! L_2 \bigg(\frac n2,\frac k2\bigg)$ \\
			& $=$ & $\ds \sum_{j=1}^{i-1}(-1)^{i+j+1} L_2(n,j)+ (-1)^{i+1}\!\!\sum_{\substack{k<i\\4|(n+k+2)}}\! L_2 \bigg(\frac n2,\frac k2\bigg).$\\ 
	\end{tabular}
	\vspace{6pt}
	
A similar argument holds when $n$ is even and  $n+i-1 \not\equiv 2 \!\pmod 4$. In this case, the rightmost sum does not acquire an extra term when rewriting it as a sum over $k<i$. 
\end{proof}

\section{Cycles in length 3 vector grid classes}\label{section 3-vector}

In this section, we enumerate cyclic permutations in each vector grid class of length 3. In Section \ref{sec:1and2}, we enumerate $\C_n^{+++}$ and $\C_n^{---}$, in Section \ref{SECTION 34}, we enumerate $\C_n^{+-+}$ and $\C_n^{-+-}$, and in Section \ref{SECTION 5678}, we enumerate $\C_n^{++-}$, $\C_n^{+--}$, $\C_n^{-++}$, and $\C_n^{--+}$. Finally, in Section~\ref{SECTION WILF}, we determine Wilf-equivalence classes for the length 3 vector grid classes.

\subsection{Enumerating $\mathcal C_n^{+++}$ and $\mathcal C_n^{---}$}\label{sec:1and2}

In Theorem \ref{thm sigma1 and sigma 2}, we enumerate both $\C_n^{+++}$ and $\C_n^{---}$. 
\begin{theorem}\label{thm sigma1 and sigma 2}
If $n\geq 1$, then $$c_{+++}(n) = L_3(n) - nL_2(n),$$ and if $n\geq 3$, then $$c_{---}(n) = L_3^*(n;---) - nL_2^*(n;--).$$
\end{theorem}
\begin{proof}
First, let us find $c_{+++}(n)$. The cases when $n$ is 1 or 2 are easily checked. Suppose $n\geq 3$. Notice that $c_{+++}(n) = |\C_n^{+++}\setminus \C_n^{++}|+|\C_n^{++}|$. We know also from Theorem \ref{thm: ++ and --} that $|\C_n^{++}|=L_2(n)$. We also know that permutations in $\C_n^{++}$ have exactly one descent, since the only permutation with fewer descents is the increasing permutation, which is not cyclic when $n\geq 2$. 

If $\pi \in \C_n^{+++}\setminus \C_n^{++}$, then $\pi$ must have exactly 2 descents. Suppose the descent set is $\{i,j\}$, with $i<j$. Then we must have that the $+++$-segmentation of $\pi$ is $(0,i,j,n)$. Thus, there is only one $+++$-segmentation. If $\pi\in\C_n^{++}$, then $\pi$ has exactly one descent. If the descent is at position $i$, then the $+++$-segmentation is $(0, e_1, e_2, n)$, where $\{e_1, e_2\} = \{i, j\}$ for any $j\in\{0,1,2,\ldots, n\}$. 
Thus, by Theorem \ref{BIJECTION BETWEEN WORDS AND SIGSEG}, we obtain that $$L_3^*(n;+++) = |\C_n^{+++}\setminus \C_n^{++}| + (n+1)|\C_n^{++}|.$$ Taken with the formula for $c_{+++}(n)$ in the second sentence of this proof and the fact that $L_3^*(n;\sigma) = L_3(n)$, we obtain the result $c_{+++}(n) = L_3(n) - nL_2(n)$.
The formula for $c_{---}(n)$ is found similarly.
\end{proof}

As stated in the introduction, the theorem above is a special case of theorems that appear in \cite{gessel} and \cite{ARCHERELIZALDE2014}. The general theorem is stated below. Let $n\geq1$ and $C_n(t) = \sum_{\pi\in\C_n} t^{\text{des}(\pi)},$ where $\text{des}(\pi)$ is the number of descents of $\pi$. 
\begin{theorem}\emph{\cite[Theorem 6.1]{gessel}}
If $n\geq 1$, then
$$
\dfrac{C_n(t)}{(1-t)^{n+1}} = \frac{1}{n} \sum_{d|n} \mu(d) \sum_{m=0}^{\infty} m^{n/d} t^m.
$$
\end{theorem}

Notice that Theorem~\ref{thm sigma1 and sigma 2} implies that the signatures $+++$ and $---$ are weakly cyc-Wilf-equivalent since when $n\not\equiv 2\!\pmod 4$, we must have $L_2^*(n;--) = L_2(n)$ and $L_3^*(n;---) = L_3(n)$. Clearly, the second statement of Theorem~\ref{thm sigma1 and sigma 2} is equivalent to $$
c_{---}(n) = \begin{cases}
 L_3(n) - nL_2(n) & \text{ when $n\not\equiv 2\!\!\!\!\pmod 4$} \\ 
 L_3(n) +L_3(\frac{n}{2})- n(L_2(n)+L_2(\frac{n}{2})) & \text{ when $n\equiv 2\!\!\!\!\pmod 4$}. 
\end{cases}
$$
It is straightforward to see that in the case when $n\equiv 2\!\pmod 4$, we have that $$d_3(n):=c_{---}(n)-c_{+++}(n) = L_3\bigg(\frac{n}{2}\bigg)-nL_2\bigg(\frac{n}{2}\bigg).$$ A table of these values can be seen in Table \ref{TABLE c1 c2}.

	\begin{figure}[h]
		\centering
	\begin{tabular}{|c|c|c|c|c|c|c|c|c|c|}
	\hline
	$n$ & 6 & 10 & 14 & 18 & 22 & 26 & 30& 34\\ \hline
	 $d_3(n)$ & $-4$ & $-12$ & 60 & 1176 & 12012 & 106260& 891116 & 7334340\\ \hline
	\end{tabular} 
	\captionof{table}{A table of the first eight values of $d_3:=c_{---}(n)-c_{+++}(n)$ for $n$ satisfying $n>2$ and $n\equiv 2 \!\pmod 4$.}
	\label{TABLE c1 c2}
	\end{figure}

\subsection{Enumerating $\mathcal C_n^{+-+}$ and $\mathcal C_n^{-+-}$}\label{SECTION 34}

In this section, we enumerate the set of cyclic permutations in the $+-+$-class and the $-+-$-class. Recall that we define a \textit{peak} of the permutation $\pi\in\S_n$ to be $i\in[n]$ so that $\pi_{i-1}<\pi_i$ and $\pi_i>\pi_{i+1}$ (where $\pi_0=\pi_{n+1}:=0$). Similarly, we define a \textit{valley} of the permutation $\pi\in\S_n$ to be $i\in[n]$ so that $\pi_{i-1}>\pi_i$ and $\pi_i<\pi_{i+1}$ (where $\pi_0=\pi_{n+1}:=n+1$). For example, the permutation $4156732$ has peaks at positions 1 and 5 and has valleys at positions 2 and 7. 

\begin{lemma}
 Let $n\geq 3$. For any permutation $\pi\in\C_n^{+-+}$, there is a unique choice of $i,j \in [n]$ with $i<j$ so that $i$ is a peak and $j$ is a valley.
\end{lemma}

\begin{proof}
Suppose that $\pi\in \C_n^{+-+}$.  If $\e=(0, e_1, e_2, n)$ is a $+-+$-segmentation of $\pi$, then $\pi_1\pi_2\ldots\pi_{e_1}$ is increasing, $\pi_{e_1+1}\ldots \pi_{e_2}$ is decreasing, and $\pi_{e_2+1}\ldots \pi_n$ is increasing. If we have strict inequalities $0<e_1<e_2<n$, then clearly, $i=e_1$ if $\pi_{e_1}>\pi_{e_1+1}$ and $i=e_1+1$ otherwise, and $j=e_2$ if $\pi_{e_2}>\pi_{e_2+1}$ and $j=e_2+1$ otherwise. In this case, we do not have $i=j$ since that would imply that $\pi$ is the increasing permutation, which is not cyclic. Similarly, at least two of the inequalities $0\leq e_1\leq e_2\leq n$ are strict inequalities, otherwise $\pi$ would be the increasing or decreasing permutation.
 
If we have that $e_1=0$, then $\pi \in \C_n^{-+}$. In this case, we can take $i=1$ and $j$ as above. Again, we do not have $i=j$ as that would imply $\pi$ is strictly increasing. If we have that $e_2=n$, we can take $j=n$ and $i$ as above. We similarly do not have $i=j$. Finally, if $e_1=e_2$, then $i=e_1$ and $j=e_1+1$. 
\end{proof}

For example, consider the permutation $\pi = 356894127 \in \C_9^{+-+}$. There are two peaks (namely 5 and 9) and two valleys (namely 1 and 7), but there is a unique pair $i=5$ and $j=7$ satisfying the condition that $i<j$, where $i$ is a peak and $j$ is a valley.

\begin{theorem}\label{COUNTING SIGMA3}
For $n \geq 3$, $c_{+-+}(n)=\dfrac{L_3^*(n;+-+)}{4}$.
\end{theorem}

\begin{proof} Let $\sigma:=+-+$ for the duration of this proof.
By Theorem~\ref{BIJECTION BETWEEN WORDS AND SIGSEG}, we have
	\[L_3^*(n;\sigma)= \sum_{\pi \in \mathcal C_n^{+-+}} |\{\e: \e \text{ is a }\sigma\text{-segmentation of } \pi\}|.\]
If $\pi \in \mathcal C_n^{\sigma}$, then there exists some unique $i,j \in [n]$ with $i<j$ such that $i$ is a peak and $j$ is a valley.  Suppose $\e=(0, e_1, e_2, n)$ is a $\sigma$-segmentation of $\pi$. Certainly, $e_1\leq i$.  If $e_1<i-1$, then $e_2 \in \{e_1,e_1+1\}$ and thus $e_2<i$.  Since $i$ is a peak, it is followed by a descent and thus the $\sigma$-segmentation constructed is not valid. Therefore, we must have that $e_1\in\{i-1,i\}$. For similar reasons, $e_2 \in \{j-1,j\}$ and so the following list of four $\sigma$-segmentations of $\pi$ is complete:
	\begin{enumerate}[(a)]
		\item $(0, i-1, j-1, n)$,
		\item $(0, i-1, j, n)$,
		\item $(0, i, j-1, n)$,
		\item $(0, i, j, n)$.
	\end{enumerate} 
  Therefore, $L_3^*(n;\sigma)=4c_{\sigma}(n)$ and the result follows.
\end{proof}

Notice that for a permutation $\pi\in\C_n^{-+-}$, there is a unique $i,j \in [n]$ with $i<j$ so that $i$ is a valley and $j$ is a peak. 
Thus, we omit the proof of the following theorem due to its similarities to the proof of Theorem~\ref{COUNTING SIGMA3}.

\begin{theorem}\label{COUNTING SIGMA4}
For $n \geq 3$, $c_{-+-}(n)=\dfrac{L_3^*(n;-+-)}{4}.$
\end{theorem}


\subsection{Enumerating $\C_n^{++-}$, $\C_n^{+--}$, $\C_n^{-++}$, and $\C_n^{--+}$.}\label{SECTION 5678}

In this section, we enumerate the set $\mathcal C_n^{\sigma}$ for the four remaining signatures $\sigma\in\{+,-\}^3$. 
\begin{theorem}\label{COUNT SIGMA5}
Let $\Lambda(n,i)$ be the value given in Lemma~\ref{UP DOWN PEAK}. If $n\geq 2$, then
	\[c_{++-}(n)=\frac{L^*_3(n;++-)}{2}-\sum_{i=2}^{n-1}i\cdot\Lambda(n,i).\]
\end{theorem}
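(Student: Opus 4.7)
The plan is to apply Theorem~\ref{BIJECTION BETWEEN WORDS AND SIGSEG}, which reduces the problem to counting, for each $\hat\pi\in\C_n^{\sigma_5}$, the number of $\sigma_5$-segmentations of $\hat\pi$ and then summing.  I would partition $\C_n^{\sigma_5}$ into two disjoint classes: unimodal cyclic permutations (which constitute exactly $\C_n^{\bar\sigma}$, since every unimodal permutation trivially lies in $\S^{\sigma_5}$ via the segmentation $0\le 0\le p\le n$ where $p$ is its peak), and the remaining non-unimodal cyclic permutations lying in $\S^{\sigma_5}$, which number $c_5(n)-\Lambda(n)$.

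For a unimodal $\hat\pi$ with peak at $p$ (so $\pi_p=n$), the descent set of $\pi$ equals the contiguous block $\{p,p+1,\ldots,n-1\}$. A pair $(e_1,e_2)$ is a valid $\sigma_5$-segmentation iff the suffix $\pi_{e_2+1}\cdots\pi_n$ is decreasing (which forces $e_2\ge p-1$) and the prefix $\pi_1\cdots\pi_{e_2}$ has at most one descent, located at $e_1$ (which forces $e_2\le p+1$). The three admissible values $e_2\in\{p-1,p,p+1\}$ produce $p$, $p+1$, and $1$ choices of $e_1$ respectively, totalling $2(p+1)$ segmentations.

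For a non-unimodal $\hat\pi\in\C_n^{\sigma_5}$, the key structural observation I would establish is that any $\pi\in\S^{\sigma_5}$ has descent set of the form $\{d_1\}\cup\{d,d+1,\ldots,n-1\}$, where $d$ is the start of the final decreasing run and $d_1\le d-2$ is an optional isolated earlier descent; the unimodal case corresponds exactly to the absence of $d_1$. In the non-unimodal case, the only valid choices are $e_2\in\{d-1,d\}$, and each forces $e_1=d_1$, producing exactly $2$ segmentations.

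Assembling these counts via Theorem~\ref{BIJECTION BETWEEN WORDS AND SIGSEG} yields
\begin{align*}
L_3^*(n;\sigma_5)
 &= \sum_{i=1}^{n}2(i+1)\,\Lambda(n,i)+2\bigl(c_5(n)-\Lambda(n)\bigr)\\
 &= 2\,c_5(n)+2\sum_{i=1}^{n}i\,\Lambda(n,i),
\end{align*}
and since $\Lambda(n,1)=\Lambda(n,n)=0$, we may restrict the sum to $2\le i\le n-1$ and solve for $c_5(n)$, obtaining the stated identity. The main obstacle will be the structural description of $\S^{\sigma_5}$ in terms of descents---in particular, verifying that every non-unimodal element has descent set of the specific form above---together with the careful boundary bookkeeping when enumerating segmentations in the unimodal case.
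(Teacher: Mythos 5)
Your proof is correct and follows essentially the same route as the paper: split $\C_n^{\sigma_5}$ into the unimodal cyclic permutations and the rest, count $2(i+1)$ segmentations for a unimodal permutation with peak at $i$ and exactly $2$ for each non-unimodal one, then apply Theorem~\ref{BIJECTION BETWEEN WORDS AND SIGSEG} and solve for $c_5(n)$. The only difference is that you spell out the descent-set justification for the "exactly two segmentations" claim, which the paper leaves as an assertion analogous to earlier arguments.
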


\begin{proof}
Let $\sigma:=++-$ for the duration of this proof. If a permutation is an element of $\mathcal C_n^{++-}\setminus \C_n^{+-}$, then it has two $\sigma$-segmentations.  As a result, the equality
	\[c_{\sigma}(n) = |\mathcal C_n^{++-}\setminus \C_n^{+-}|+\Lambda(n)\]
and Theorem~\ref{BIJECTION BETWEEN WORDS AND SIGSEG} imply
	\begin{eqnarray*}
		L_3^*(n;\sigma) & = & \sum_{\pi \in\mathcal C_n^{\sigma}} |\{\e: \e\text{ is a }\sigma\text{-segmentation of } \pi\}|  \\
			& = & 2\big(c_{\sigma}(n)-\Lambda(n)\big) + \sum_{\pi \in C_n^{+-}} |\{\e: \e\text{ is a }\sigma\text{-segmentation of } \pi\}|.
	\end{eqnarray*}
Notice there are $2(i+1)$ $\sigma$-segmentations of each $\pi \in \C_n^{+-}$ with a peak at $i$.  Specifically, each of these $\sigma$-segmentations will have one of the following forms:
	\begin{enumerate}[(a)]
		\item $(0, j, i-1, n)$ for each $j \in \{0,1,\ldots,i-1\}$,
		\item $(0, j, i, n)$ for each $j \in \{0,1,\ldots,i\}$,
		\item $(0, i, i+1, n)$.
	\end{enumerate} 
A similar argument to the one in the proof of Theorem~\ref{COUNTING SIGMA3} shows that these are the only $\sigma$-segmentations of $\pi$.

Let $\C_{n,i}^{+-}$ denote the set of permutations $\pi \in \C_n^{+-}$ with a peak in position $i$. Then
	\begin{eqnarray*}\sum_{\pi \in \C_n^{+-}} |\{\e: \e\text{ is a }\sigma\text{-segmentation of }\pi\}| & =&   \sum_{i=2}^{n-1}  \sum_{\pi \in \C_{n,i}^{+-}} 2(i+1)\\ &=&   \sum_{i=2}^{n-1}  2(i+1)\Lambda(n,i)\end{eqnarray*}
and
	\[L_3^*(n;++-) = 2c_{++-}(n)-2\Lambda(n)+\sum_{i=2}^{n-1}  2(i+1)\Lambda(n,i).\]
Since $\Lambda(n) = \sum \Lambda(n,i)$,  the theorem follows from the equality above.
\end{proof}

\begin{theorem} Let $\Lambda(n,i)$ be the value given in Lemma~\ref{UP DOWN PEAK}. If $n\geq 2$, then
	\[c_{+--}(n)=\frac{L^*_3(n;+--)}{2}-\sum_{i=2}^{n-1}(n-i+1)\Lambda(n,i).\]
\end{theorem}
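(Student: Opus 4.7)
The plan is to imitate the proof of Theorem~\ref{COUNT SIGMA5} (which handles $\sigma_5=++-$) almost verbatim, replacing the per-permutation segmentation counts with ones appropriate to $\sigma_6=+--$. By Theorem~\ref{BIJECTION BETWEEN WORDS AND SIGSEG} applied to $\sigma_6$, we have
$$L_3^*(n;\sigma_6)=\sum_{\hat\pi\in\C_n^{\sigma_6}}|\{\sigma_6\text{-segmentations of }\hat\pi\}|,$$
and I would split this sum according to whether $\hat\pi\in\C_n^{+-}\subseteq\C_n^{\sigma_6}$ (the cyclic unimodal permutations) or $\hat\pi\in\C_n^{\sigma_6}\setminus\C_n^{+-}$.

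First, each $\hat\pi\in\C_n^{\sigma_6}\setminus\C_n^{+-}$ should admit exactly two $\sigma_6$-segmentations. The argument is analogous to the one opening the proof of Theorem~\ref{COUNTING SIGMA3}: in any segmentation $0\leq e_1\leq e_2\leq n$, the two descending runs cannot be merged (else $\hat\pi$ would be unimodal), which forces $\pi_{e_2}<\pi_{e_2+1}$ and pins $e_2$ down as the unique such position for $\hat\pi$. Once $e_2$ is fixed, requiring the ascending prefix to terminate at (or one before) the unique first peak $p$ of $\hat\pi$ leaves precisely the two choices $e_1\in\{p-1,p\}$.

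Next, for $\hat\pi\in\C_n^{+-}$ with peak at position $i$, a case analysis on $e_1$ (analogous to the $\sigma_5$ case but with the roles of the ascending and descending sides essentially reversed) shows that the complete list of $\sigma_6$-segmentations of $\hat\pi$ is
\begin{enumerate}[(a)]
\item $0\leq i\leq e_2\leq n$ for each $e_2\in\{i,i+1,\ldots,n\}$,
\item $0\leq i-1\leq e_2\leq n$ for each $e_2\in\{i-1,i,\ldots,n\}$,
\item $0\leq i-2\leq i-1\leq n$,
\end{enumerate}
giving a total of $(n-i+1)+(n-i+2)+1=2(n-i+2)$ segmentations per unimodal $\hat\pi$ with peak at $i$. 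Combining these counts and using $\Lambda(n)=\sum_{i=2}^{n-1}\Lambda(n,i)$ yields
$$L_3^*(n;\sigma_6)=2\bigl(c_6(n)-\Lambda(n)\bigr)+\sum_{i=2}^{n-1}2(n-i+2)\Lambda(n,i)=2c_6(n)+2\sum_{i=2}^{n-1}(n-i+1)\Lambda(n,i),$$
from which the stated formula follows after dividing by $2$. The main obstacle I anticipate is verifying exhaustiveness of the unimodal case analysis, and in particular checking that the additional single-element type~(c) segmentation (which has no direct analogue in the $\sigma_5$ proof) is valid while no segmentation with $e_1\leq i-3$ exists; both checks require tracking the placement of the middle decreasing segment (of length at most one) within the ascending prefix $\pi_1\pi_2\cdots\pi_i$.
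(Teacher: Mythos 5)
Your proposal is correct and follows essentially the same route as the paper: apply Theorem~\ref{BIJECTION BETWEEN WORDS AND SIGSEG} to $\sigma_6$, note that each $\hat\pi\in\C_n^{\sigma_6}\setminus\C_n^{+-}$ has exactly two $\sigma_6$-segmentations, list the same three families (a)--(c) of segmentations for a cyclic unimodal permutation with peak at $i$, and solve for $c_6(n)$. Your per-permutation total $2(n-i+2)$ is the right count (the paper's displayed "$2(n+i+2)$" is a sign typo, as only $2(n-i+2)$ is consistent with the stated formula), and your closing algebra using $\Lambda(n)=\sum_{i}\Lambda(n,i)$ matches the paper's.
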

\begin{proof} Let $\sigma:=+--$ for the duration of this proof. 
As in the proof of Theorem~\ref{COUNT SIGMA5}, there are exactly two $\sigma$-segmentations of $\pi$ whenever $\pi \in \C_n^{+--}\setminus\C_n^{+-}$. Furthermore, for $\pi \in \C_n^{+-}$ with a peak at $i$, a similar argument to the one in the proof above shows that the $\sigma$-segmentations of each of these permutations will have one of the following forms:
	\begin{enumerate}[(a)]
		\item $(0, i-1, j, n)$ for each $j \in \{i-1, i, \ldots, n\}$,
		\item $(0, i, j, n)$ for each $j \in \{i, i+1, \ldots, n\}$,
		\item $(0, i-2, i-1, n)$.
	\end{enumerate} 
Thus
	\begin{eqnarray*}
		\sum_{\pi \in \C_n^{+-}} |\{\e: \e\text{ is a }\sigma\text{-segmentation of } \pi\}|  & = &  \sum_{i=2}^{n-1}  \sum_{\pi \in \C_{n,i}^{+-}} 2(n+i+2) \\
		& = &  \sum_{i=2}^{n-1}  2(n+i+2)\Lambda(n,i)
	\end{eqnarray*}
and the result now follows.
\end{proof}

The following corollaries are immediate results of Proposition~\ref{REVERSE SAME} and Lemma~\ref{REVERSE WORD}.
\begin{cor} The signatures $++-$ and $-++$ are cyc-Wilf-equivalent, and thus if $n\geq 2$, 
$$\displaystyle c_{-++}(n) = \frac{L^*_3(n;-++)}{2}-\sum_{i=2}^{n-1}i\cdot\Lambda(n,i).$$
\end{cor}

\begin{cor}
The signatures $+--$ and $--+$ are cyc-Wilf-equivalent, and thus if $n\geq 2$, 
$$\displaystyle c_{--+}(n) = \frac{L^*_3(n;--+)}{2}-\sum_{i=2}^{n-1}(n-i+1)\Lambda(n,i).$$
\end{cor}

\subsection{cyc-Wilf-equivalence}\label{SECTION WILF}

In this section, we discuss the equivalence classes of the cyclic permutations in the length 3 vector grid classes. Recall two classes are cyc-Wilf-equivalent if the sets of cyclic permutations in each class are equinumerous, and we say two classes are weakly cyc-Wilf-equivalent if they are equinumerous when $n\not\equiv 2 \!\pmod 4$.

\begin{theorem}\label{thm sigma3 equiv sigma4}
For $n \geq 1$, $c_{+-+}(n)= c_{-+-}(n)$ and thus the signature $+-+$ is cyc-Wilf-equivalent to the signature $-+-$. 
\end{theorem}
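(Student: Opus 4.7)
The plan is to prove the stronger identity $L_3^*(n; \sigma_3) = L_3^*(n; \sigma_4)$; by Theorems~\ref{COUNTING SIGMA3} and \ref{COUNTING SIGMA4} this immediately delivers $c_3(n) = c_4(n)$. Using $L_3^*(n; \sigma) = L_3(n) + L_3(n/2; \sigma)$ with the convention $L_3(n/2; \sigma) = 0$ for $n$ odd, the problem reduces to showing $L_3(m; \sigma_3) = L_3(m; \sigma_4)$ where $m = n/2$, in the cases with $n$ even.

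The crucial elementary observation is that $T_{\sigma_3}^- = \{1\}$ and $T_{\sigma_4}^- = \{0, 2\}$ partition the alphabet $\{0, 1, 2\}$, so $o_{\sigma_3}(s) + o_{\sigma_4}(s) = |s|$ for every word $s$. When $n \equiv 0 \pmod{4}$ the length $m$ is even, hence $o_{\sigma_3}(s)$ and $o_{\sigma_4}(s)$ share their parity for every $s$; the two collections of primitive necklaces literally coincide and the equality is immediate. The substantive case is $n \equiv 2 \pmod{4}$, in which $m$ is odd and at least $3$.

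For that case I would invoke the Möbius-inversion template the paper has already flagged. A signed counting argument (assigning $-1$ to letters in $T_{\sigma_i}^-$ and $+1$ otherwise) gives that the number of words of length $m$ on three letters with odd $o_{\sigma_i}$ equals $(3^m - 1)/2$ for $i = 3$ and $(3^m + 1)/2$ for $i = 4$. Because $m$ is odd, every divisor $d$ of $m$ is odd, so any word $s = q^{m/d}$ built from a primitive root $q$ of length $d$ satisfies $o_{\sigma_i}(s) = (m/d)\, o_{\sigma_i}(q)$, which preserves parity. Grouping words by their primitive roots therefore yields
$$ \frac{3^m - 1}{2} \;=\; \sum_{d \mid m} d\, L_3(d; \sigma_3) \qquad \text{and} \qquad \frac{3^m + 1}{2} \;=\; \sum_{d \mid m} d\, L_3(d; \sigma_4), $$
and Möbius inversion, combined with the standard identity $\sum_{d \mid m} \mu(d) = 0$ for $m > 1$, kills the $\pm 1$ discrepancy between the two right-hand sides. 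The outcome is $L_3(m; \sigma_3) = L_3(m; \sigma_4) = L_3(m)/2$, closing the argument.

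The main obstacle is essentially the $n \equiv 2 \pmod{4}$ case: the complementation bijection of Lemma~\ref{REVERSE WORD} fixes $\sigma_3$ and $\sigma_4$ individually (both are palindromes whose $-$-sets are stable under $i \mapsto 2-i$), so no bijective shortcut falls out of the machinery already developed. The equality really does rest on the arithmetic cancellation $\sum_{d \mid m} \mu(d) = 0$, and the slightly delicate point to get right in the write-up is that the parity-preservation of $o_{\sigma_i}$ under primitive-root decomposition requires $m/d$ odd, which in turn uses that $m$ itself is odd.
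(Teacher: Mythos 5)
Your proposal is correct and follows essentially the same route as the paper: the same reduction to $L_3(\frac n2;\sigma_3)=L_3(\frac n2;\sigma_4)$, the same parity observation disposing of $n\equiv 0 \bmod 4$, and the same M\"obius-inversion computation (using $\sum_{d\mid m}\mu(d)=0$) for $n\equiv 2\bmod 4$. The only cosmetic difference is that the paper inverts only the $\sigma_3$ word count $(3^m-1)/2$ to get $L_3(m;\sigma_3)=L_3(m)/2$ and deduces the $\sigma_4$ count by complementing parities, whereas you invert both counts $(3^m\mp1)/2$ and cancel the $\pm1$ directly.
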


\begin{proof}
For $n=1$ and $n=2$, this is easily checked and thus we assume $n\geq 3$. By Theorems~\ref{COUNTING SIGMA3} and~\ref{COUNTING SIGMA4}, it is enough to show $L^o_3(\frac{n}{2};+-+)=L^o_3(\frac{n}{2};-+-)$. Since these are both 0 when $n$ is odd, we need only consider the cases when $n$ is even. It suffices to show that the number of necklaces $[s]$ of length $\frac{n}{2}$ when $o_{+-+}(s)$ is odd is equal to the number of necklaces $[s']$ of length $\frac{n}{2}$ when $o_{-+-}(s)$ is odd.
Note that for a necklace $[s]$ with $s_i \in \{0,1,2\}$ of length $\frac{n}{2}$, we have $o_{+-+}(s)=|\{i \in [n] : s_i =1\}|$ and $o_{-+-}(s)=|\{i \in [n] : s_i \neq1\}|$ by definition.   We proceed by cases.

\textit{Case 1.} When $n \equiv 0 \!\pmod 4$, $\frac n2$ is even, and thus for any necklace $[s]$ of length $\frac n2$, $o_{+-+}(s)$ is odd if and only if $o_{-+-}(s)$ is odd. Therefore, $L^o_3(\frac{n}{2};+-+)=L^o_3(\frac{n}{2};-+-)$.

\textit{Case 2.} When $n\equiv 2 \!\pmod 4$, $\frac n2$ is odd and we will show using M\"{o}bius inversion that exactly half of necklaces of length $\frac n2$ have $o_{+-+}(s)$ odd. This in turn implies that there are equally many necklaces with $o_{+-+}(s)$ odd as there are with $o_{+-+}(s)$ even (or equivalently, $o_{-+-}(s)$ odd). 

Take $m := \frac n2$ for simplicity of notation. Let $a(m)$ be the number of ternary primitive words with $o_{+-+}(s)$ odd and let $b(m)$ be the number of ternary words (which are not necessarily primitive) with $o_{+-+}(s)$ odd. It is easily checked that $b(m) = \frac{3^n-1}{2}$. Also, for any $m$, 
	$$b(m) = \sum_{\substack{d|m\\ d\ \mathrm{odd}}} a(m/d).$$
Since $m$ is odd, all its divisors are also odd, so we can write $$b(m) = \sum_{d\mid m} a(m/d)= \sum_{d\mid m} a(d).$$ Using M\"{o}bius inversion, we rewrite this sum as $$ a(m) = \sum_{d\mid m} \mu(d)b(m/d).$$ 
This sum becomes $$a(m) = \frac{1}{2} \sum_{d\mid m} \mu(d) (3^{n/d}-1) = \frac{1}{2} \sum_{d\mid m} \mu(d) 3^{n/d},$$ where the equality follows from the well-known fact that $\sum \mu(d) = 0$ when the sum is taken over all divisors $d\mid m$ for any integer $m\geq 2$. It follows that $$L^o_3(m;+-+) = \frac{1}{m}a(m) = \frac{1}{2m}\sum_{d\mid m} \mu(d) 3^{n/d}$$ which is exactly half of $L_3(m)$. The result follows.
\end{proof}

\begin{theorem}
If $n\geq 1$ and $n\not\equiv 2 \!\pmod 4$, then $c_{++-}(n) = c_{+--}(n)$ and thus the signatures $++-$ and $+--$ are weakly cyc-Wilf-equivalent.
\end{theorem}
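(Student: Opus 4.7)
\medskip

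\noindent\textbf{Proof plan.} The two formulas from Theorem~\ref{COUNT SIGMA5} and the theorem above give
\[
c_5(n)-c_6(n) \;=\; \frac{L_3^*(n;\sigma_5)-L_3^*(n;\sigma_6)}{2} \;-\; \sum_{i=2}^{n-1}\bigl(i-(n-i+1)\bigr)\Lambda(n,i),
\]
so the plan is to show, under the hypothesis $n\not\equiv 2\bmod 4$, that each of the two pieces on the right vanishes.

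\medskip

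\noindent\emph{Step 1: the $\Lambda$-sums coincide.} I would apply Lemma~\ref{SYMMETRY LEMMA}, which says $\Lambda(n,i)=\Lambda(n,n-i+1)$ whenever $n\not\equiv 2\bmod 4$. Re-indexing via $j=n-i+1$ in the second sum and then invoking this symmetry,
\[
\sum_{i=2}^{n-1}(n-i+1)\Lambda(n,i) \;=\; \sum_{j=2}^{n-1} j\,\Lambda(n,n-j+1) \;=\; \sum_{j=2}^{n-1} j\,\Lambda(n,j).
\]
So the two weighted sums agree term-by-term.

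\medskip

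\noindent\emph{Step 2: the necklace counts coincide.} It remains to show $L_3^*(n;\sigma_5)=L_3^*(n;\sigma_6)$ when $n\not\equiv 2\bmod 4$. If $n$ is odd, both quantities reduce to $L_3(n)$ by the convention $L_k(\tfrac n2;\sigma)=0$, so there is nothing to check. If $n\equiv 0\bmod 4$, then $m:=n/2$ is even and I need $L_3(m;\sigma_5)=L_3(m;\sigma_6)$. Since $T_{\sigma_5}^-=\{2\}$ and $T_{\sigma_6}^-=\{1,2\}$, a primitive necklace $[s]$ of length $m$ contributes to $L_3(m;\sigma_5)$ iff its number of $2$'s is odd, and to $L_3(m;\sigma_6)$ iff its number of non-zero letters is odd; because $m$ is even, the latter is equivalent to the number of $0$'s being odd. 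Thus the transposition of letters $0\leftrightarrow 2$ (fixing $1$) gives an involution on primitive necklaces of length $m$ that swaps these two conditions, establishing the equality.

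\medskip

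\noindent\emph{Step 3: conclusion.} Combining the two steps yields $c_5(n)=c_6(n)$ for $n\not\equiv 2\bmod 4$, hence $\sigma_5$ and $\sigma_6$ are weakly c-Wilf-equivalent. The main subtlety is that neither step works when $n\equiv 2\bmod 4$: Lemma~\ref{SYMMETRY LEMMA} genuinely fails in that regime, and additionally the $\sigma$-dependent correction $L_3(m;\sigma)$ enters for $m=n/2$ odd, where the simple swap of letters above no longer reconciles the two parity conditions. This is why the equivalence is only weak.
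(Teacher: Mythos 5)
Your proposal is correct and follows essentially the same route as the paper: the re-indexing $j=n-i+1$ with Lemma~\ref{SYMMETRY LEMMA} for the weighted $\Lambda$-sums, and the even-length parity argument for $L_3(\tfrac n2;\sigma_5)=L_3(\tfrac n2;\sigma_6)$. The only cosmetic difference is that you make the $0\leftrightarrow 2$ swap an explicit involution on primitive necklaces, whereas the paper reaches the same map by passing through $\sigma_8$ and citing Lemma~\ref{REVERSE WORD}.
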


\begin{proof}
It is enough to show that when $n\not\equiv 2 \!\pmod 4$, $$\sum_{i=2}^{n-1}i\cdot\Lambda(n,i) = \sum_{i=2}^{n-1}(n-i+1)\Lambda(n,i) \quad \text{ and } \quad L^*_3(n;++-) = L^*_3(n;+--).$$
Setting $i:=n-j+1$ and using Lemma \ref{SYMMETRY LEMMA}, we obtain the equalities $$\sum_{i=2}^{n-1}i\cdot \Lambda(n,i) = \sum_{j=2}^{n-1}(n-j+1)\Lambda(n,n-j-1) = \sum_{j=2}^{n-1}(n-j+1)\Lambda(n,j).$$
If $n$ is odd, it is clear that $L^*_3(n;++-) = L^*_3(n;+--)$. When $n\equiv 0 \!\pmod 4$, it is enough to show that $L^o_3(\frac{n}{2}; ++-) = L^o_3(\frac{n}{2}; +--).$ Observe $L^o_3(\frac{n}{2}; ++-)$ is the size of the set of primitive necklaces of length $\frac{n}{2}$ which have an odd number of 2's. Since $\frac{n}{2}$ is even,  the number of 0's and 1's must be odd as well. Therefore, $L^o_3(\frac{n}{2}; ++-) = L^o_3(\frac{n}{2}; --+)$  and thus the equality $L^o_3(\frac{n}{2}; ++-) = L^o_3(\frac{n}{2}; +--)$ follows from Lemma~\ref{REVERSE WORD}.
\end{proof}


%

\section{Alternating grid classes}\label{SECTION ALTERNATING}

In this section, we prove cyc-Wilf-equivalence for a more general set of grid classes.
Define the \textit{$k$-th up-down alternating grid class} to have signature $\sigma_{alt}^{+k}=+-+-\cdots \in \{+,-\}^k$ and the \textit{$k$-th down-up alternating grid class} to have signature $\sigma_{alt}^{-k}=-+-+\cdots\in\{+,-\}^k$. 
In this section, we show that the number of cyclic permutations in the up-down alternating grid class is equal to the number of cyclic permutations in the down-up alternating grid class (i.e., they are cyc-Wilf-equivalent). When $k$ is even, this is automatically true by Proposition~\ref{REVERSE SAME}, so it remains to show this is true when $k$ is odd.

\begin{lemma}\label{lem:alt}
For all odd $k\geq 3$ and $n\geq 3$, $L^*_k(n;\sigma_{alt}^{+k}) = L_k^*(n;\sigma_{alt}^{-k}).$
\end{lemma}
The proof of this lemma is exactly the proof of Theorem \ref{thm sigma3 equiv sigma4}, modified to enumerate $k$-ary necklaces of length $\frac{n}{2}$ with $o_{\sigma_{alt}^{+k}}(s)$ odd. Again, one may use  M\"{o}bius inversion to show that when $\frac{n}{2}$ is odd, exactly half of the primitive $k$-ary necklaces of length $\frac{n}{2}$ have $o_{\sigma_{alt}^{+k}}(s)$ odd.

\begin{theorem}
For all $k\geq 2$, the signatures $\sigma_{alt}^{+k}$ and $\sigma_{alt}^{-k}$ are cyc-Wilf-equivalent.
\end{theorem}
\begin{proof}
As stated above, this is automatically true if the length of the signatures is even. Assume the length $k$ is odd. The theorem follows inductively from Lemma~\ref{lem:alt} and Theorem~\ref{BIJECTION BETWEEN WORDS AND SIGSEG}.
\end{proof}

\section{Discussion and conjectures} \label{SECTION CONJECTURES}

Recall that for $\sigma \in \{+,-\}^k$, we define the complementary signature as $\sigma_c = \sigma^0_c\sigma^1_c\ldots \sigma^{k-1}_c$ with $\sigma^i_c = +$ if $\sigma^i=-$ and $\sigma^i_c = -$ if $\sigma^i=+$.
\begin{conjecture} For all $k\geq 2$ and $\sigma \in\{+,-\}^k$, 
$\sigma$ is weakly cyc-Wilf-equivalent to $\sigma_c$.
\end{conjecture}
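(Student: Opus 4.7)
The plan rests on the bijection between necklaces and $\sigma$-segmentations provided by Theorem~\ref{BIJECTION BETWEEN WORDS AND SIGSEG}. First I would establish the set identity $N_k(n;\sigma) = N_k(n;\sigma_c)$ inside $\overline{\W}_k(n)$ whenever $n \not\equiv 2 \bmod 4$. Since $T_\sigma^- \sqcup T_{\sigma_c}^- = \{0,1,\ldots,k-1\}$, any word $q$ of length $m$ satisfies $o_\sigma(q) + o_{\sigma_c}(q) = m$. When $n$ is odd, both sets reduce to the primitive necklaces of length $n$ and so coincide. When $n \equiv 0 \bmod 4$, the 2-periodic parts are built from primitive words $q$ of length $m = n/2$ with $o_\sigma(q)$ (respectively $o_{\sigma_c}(q)$) odd; since $m$ is even, these parities agree, so the 2-periodic contributions coincide as well.

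Next, applying the evaluation-preserving form of Theorem~\ref{BIJECTION BETWEEN WORDS AND SIGSEG} to both $\sigma$ and $\sigma_c$, for each evaluation $(a_0,\ldots,a_{k-1})$ the common number of necklaces in $N_k(n;\sigma) = N_k(n;\sigma_c)$ counts simultaneously the permutations $\hat\pi \in \mathcal{C}_n^\sigma$ admitting the segmentation with cut points $e_i = \sum_{j<i} a_j$ and the permutations $\hat\pi \in \mathcal{C}_n^{\sigma_c}$ admitting those same cut points. Summing over evaluations produces the weighted identity
\[\sum_{\hat\pi \in \mathcal{C}_n^\sigma} |\{\sigma\text{-segmentations of } \hat\pi\}| = \sum_{\hat\pi \in \mathcal{C}_n^{\sigma_c}} |\{\sigma_c\text{-segmentations of } \hat\pi\}|.\]

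Converting this into the unweighted equality $c_\sigma(n) = c_{\sigma_c}(n)$ is the substantive step. My plan is to imitate the approach of Theorem~\ref{COUNT SIGMA5}: a cyclic permutation in $\mathcal{C}_n^\sigma$ admits more than the generic number of $\sigma$-segmentations precisely when it lies in some strictly coarser subclass $\mathcal{C}_n^{\sigma'}$, where $\sigma'$ is obtained from $\sigma$ by collapsing adjacent equal signs or by allowing zero-length segments. The complement operation sends such a coarser $\sigma'$ to $\sigma'_c$, and the relevant multiplicity statistic (governed by peak or valley positions as in Lemmas~\ref{SUM PEAKS} and~\ref{UP DOWN PEAK}) should be complement-symmetric thanks to Lemma~\ref{SYMMETRY LEMMA}. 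I would therefore induct on the length $k$, with base cases $k = 2$ (direct case analysis: $L_2^*(n;++) = L_2^*(n;--)$ when $n\not\equiv 2\bmod 4$, and essentially every cyclic permutation in the class has a unique segmentation) and $k = 3$ (explicitly established in this paper); at each inductive step the correction terms for $\sigma$ and $\sigma_c$ cancel using the inductive hypothesis on the coarser pair $(\sigma', \sigma'_c)$.

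The principal obstacle will be controlling these correction terms uniformly in $k$. For $k = 3$ only a single coarser signature $\bar\sigma = +-$ contributes extra segmentations, and the multiplicity is pinned down by the position of a unique peak or valley. For larger $k$ a given cyclic permutation may lie in several coarser subclasses at once, and the excess segmentation count becomes a sum over configurations of local extrema whose dependence on $n \bmod 4$ is delicate: the parity condition $n \not\equiv 2 \bmod 4$ is essential precisely because the 2-periodic asymmetry at $n \equiv 2 \bmod 4$ must be absorbed by these corrections at every level. Organizing these contributions into a clean complement-symmetric inclusion-exclusion, analogous to the sums $\sum i\cdot\Lambda(n,i)$ versus $\sum(n-i+1)\Lambda(n,i)$ that made Section~\ref{SECTION WILF} work for $k=3$, appears to be the main combinatorial challenge.
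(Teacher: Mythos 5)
First, note that the statement you were asked to prove is left as an open conjecture in the paper; there is no proof of it there, so the real question is whether your argument closes the gap on its own. It does not. The parts you carry out rigorously are correct but only establish the ``easy half'': since $o_\sigma(q)+o_{\sigma_c}(q)$ equals the length of $q$, the sets $N_k(n;\sigma)$ and $N_k(n;\sigma_c)$ coincide when $n\not\equiv 2\bmod 4$, hence $L_k^*(n;\sigma)=L_k^*(n;\sigma_c)$, and Theorem~\ref{BIJECTION BETWEEN WORDS AND SIGSEG} then gives equality of the \emph{segmentation-weighted} sums over $\mathcal{C}_n^\sigma$ and $\mathcal{C}_n^{\sigma_c}$. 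That much is sound. But the conjecture asserts equality of the unweighted cardinalities, and passing from the weighted identity to $|\mathcal{C}_n^\sigma|=|\mathcal{C}_n^{\sigma_c}|$ is exactly the substance of the problem: different permutations admit different numbers of segmentations, the multiplicity distribution is not complement-symmetric in any obvious way, and complementation does not even map $\mathcal{C}_n^\sigma$ into $\mathcal{C}_n^{\sigma_c}$ since it destroys cyclicity. Your third step, where this conversion would have to happen, is a program rather than a proof: ``correction terms should cancel by induction on $k$ using a complement-symmetric inclusion-exclusion'' is never formulated precisely, no analogue of Lemmas~\ref{SUM PEAKS}, \ref{SYMMETRY LEMMA} or \ref{UP DOWN PEAK} is stated for general $\sigma$, and you acknowledge yourself that organizing the excess-multiplicity contributions is the unresolved combinatorial challenge.

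To see why this step cannot be waved through, look at how much machinery the $k=3$ cases required even in the paper: the weak equivalence of $\sigma_5$ and $\sigma_6$ needed the explicit excess counts $\sum i\cdot\Lambda(n,i)$ versus $\sum(n-i+1)\Lambda(n,i)$ and the nontrivial symmetry $\Lambda(n,i)=\Lambda(n,n-i+1)$ of Lemma~\ref{SYMMETRY LEMMA}, which itself fails when $n\equiv 2\bmod 4$; the fact that $+--$ and $-++$ are \emph{not} c-Wilf-equivalent despite having equal weighted counts for $n\not\equiv 2\bmod 4$ shows that the multiplicity structure genuinely differs between a signature and its complement and must be analyzed, not assumed away. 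For general $k$ a cyclic permutation can lie in many coarser subclasses simultaneously, its number of $\sigma$-segmentations depends on the positions of several extrema at once, and no uniform symmetry statement replacing Lemma~\ref{SYMMETRY LEMMA} is currently known. So your proposal correctly isolates where the difficulty lives, and its first two reductions are fine, but as written it leaves the conjecture exactly as open as the paper does.
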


A signature $\sigma$ has $k$ \textit{corners} if there are $k$ changes in sign. For example, $\sigma =+++$ has 0 corners, $\sigma = +-+-+-$ has 5 corners, and $\sigma = ++---+$ has 2 corners. The following conjecture subsumes the one above since a signature and its complement have the same number of corners.
\begin{conjecture} 
For all $k\geq 2$ and $\sigma \in\{+,-\}^k$, signatures with $k$ corners are all weakly cyc-Wilf-equivalent. 
\end{conjecture}

%
%
%

More generally, one could use Theorem~\ref{BIJECTION BETWEEN WORDS AND SIGSEG} to enumerate the set of cyclic permutations in other families of grid classes. Furthermore, one could generalize this theorem to a bijection between $\sigma$-segmentations of permutations with other cycle types and multisets of necklaces in order to enumerate permutations in grid classes by their cycle type. Some progress towards this for $\sigma = +-$ and $\sigma = +^k$ can be found in \cite{Thibon} and \cite{gessel}.  



\bibliographystyle{plain}
\bibliography{References_LK}

\end{document}